\crefname{hypothesis}{Hypothesis}{Hypotheses}
\title{High-order Lagrange multiplier schemes for general Hamiltonian PDEs\thanks{\textbf{Funding:} This work is supported by the National Natural Science Foundation of China (Grant No. 12526576, 12171245), the University Annual Scientific Research Plan of Anhui Province (Grant No. 2022AH050200), and the Foundation of Anhui Normal University (Grant No. 903762201).}}
\author{
Yonghui Bo\thanks{Corresponding author. School of Mathematics and Statistics, Anhui Normal University, Wuhu, 241002, China (\email{boyonghui@ahnu.edu.cn}).}
\and Yushun Wang\thanks{Ministry of Education Key Laboratory for NSLSCS, Jiangsu Collaborative Innovation Center of Biomedical Functional Materials, School of Mathematical Sciences, Nanjing Normal University, Nanjing, 210023, China (\email{wangyushun@njnu.edu.cn}).}
}
\begin{document}
\maketitle
\begin{abstract}
In this paper, we introduce a Lagrange multiplier approach to construct linearly implicit energy-preserving schemes of arbitrary order for general Hamiltonian PDEs. Unlike the widely used auxiliary variable methods, this novel approach does not require the nonlinear part of the energy to be bounded from below, thereby offering broader applicability. Moreover, this approach preserves the original energy exactly at both the continuous and discrete levels, as opposed to a modified energy preserved by the auxiliary variable methods. Rigorous proofs are provided for the energy conservation and numerical accuracy of all derived schemes. The trade-off for these advantages is the need to solve a nonlinear algebraic equation to determine the Lagrange multiplier. Nevertheless, numerical experiments show that the associated computational cost is generally not dominant, indicating that the new schemes retain computational efficiency comparable to the auxiliary variable-based schemes. Numerical results demonstrate the efficiency, accuracy, and structure-preserving properties of the proposed schemes.
\end{abstract}

\begin{keywords}
Hamiltonian PDEs, 
Lagrange multiplier approach, 
energy-preserving methods, 
linearly implicit schemes, 
high-order schemes
\end{keywords}

\begin{MSCcodes}
65L06, 65M12
\end{MSCcodes}

\section{Introduction}\label{sec:introduction}
Many important conservation models, including the nonlinear Schr\"odinger equation, the sine-Gordon equation, and the Korteweg-de Vries equation, can be reformulated as Hamiltonian systems \cite{leimkuhler-04-SHD}. In this paper, we consider Hamiltonian PDEs of the form
\begin{equation}\label{eq-1-1}
	z_t=\mathcal{S}\frac{\delta\mathcal{H}}{\delta z},
\end{equation}
where $\mathcal{S}$ is a skew-adjoint operator independent of the solution variable $z(x,t)$, with $(x,t)\in\Omega\times [0,T]$ and $\Omega\subset\mathbb{R}^d$. A most intuitive property of \eqref{eq-1-1} is the conservation of energy, meaning that the Hamiltonian $\mathcal{H}$ is invariant under the continuous flow, i.e.,
\begin{equation}\label{eq-1-2}
	\frac{d}{dt}\mathcal{H}(z)=0,\quad\mathcal{H}(z)=\frac{1}{2}\big(z,\mathcal{L}z\big) +\big(N(z),1\big),
\end{equation}
where $\frac{\delta\mathcal{H}}{\delta z}=\mathcal{L}z+N'(z)$, $\mathcal{L}$ is a symmetric non-negative linear operator, $N(z)$ denotes the nonlinear energy density, and $(\cdot, \cdot)$ represents the standard $L^2$ inner product on $\Omega$. Nowadays, numerical schemes that preserve a discrete analogue of \eqref{eq-1-2} are often preferred over non-conservative methods, especially for long-time simulations.

Fully implicit schemes have served as a cornerstone in the development of energy-preserving methods and continue to be of fundamental importance. For problems with quadratic energy functionals, symplectic Runge-Kutta (RK) methods \cite{hairer-06-GNI-ODE} are naturally energy-preserving and can achieve arbitrary accuracy by increasing the number of stages. For more general forms of energy, discrete gradient methods \cite{mcLachlan-99-DG-PTRSLA} provide a systematic framework for constructing energy-preserving schemes. Among these, the averaged vector field (AVF) method \cite{quispel-08-AVF-JPAMT} and its extensions \cite{cai-18-PAVF-JCP,wu-13-oscil-JCP} are particularly prominent. These methods were later generalized within the discrete variational derivative (DVD) framework \cite{furihata-11-DVD-CHCRC}. Driven by the growing demand for high-precision simulations, high-order methods have attracted significant attentions. Examples include high-order AVF schemes \cite{hairer-10-coll-JNAIAM,Li-16-AVF-JCM}, time finite element methods \cite{betsch-00-time-FEM-JCP,tang-12-time-FEM-AMC}, and Hamiltonian boundary value methods (HBVMs) \cite{luigi-19-HBVM-KDV-JCAM,luigi-10-HBVM-JNAIAM,luigi-20-gyrocenter-HBVM-JCAM}. In particular, HBVMs have been employed as spectral methods for efficiently solving some classes of Hamiltonian PDEs \cite{luigi-19-spectral-space-time-HPDEs-NA}. Despite these advances, all aforementioned methods are fully implicit and require solving nonlinear systems. This often results in high computational costs, especially in large-scale simulations or for problems with strong nonlinearities. Additional challenges arise from the need for efficient nonlinear solvers \cite{luigi-11-efficient-HBVM-JCAM}. These aspects can limit their practicality in computationally intensive settings, which motivates ongoing research into more efficient alternatives.

Linearly implicit energy-preserving schemes provide an effective alternative to the above problems. These schemes not only preserve the energy but also enable the efficient computation by requiring only solving linear systems at each time step. For systems with polynomial energies, the multiple DVD method \cite{matsuo-01-DC-JCP} and the polarization approach \cite{dahlby-11-general-IP-SIAMJSC} have been proposed. However, no general methodology is available for conserving arbitrary forms of energy until the advent of auxiliary variable methods, which include the scalar auxiliary variable (SAV) \cite{shen-18-SAV-JCP} and the invariant energy quadratization (IEQ) \cite{yang-17-IEQ-JCP} methods. Although originally developed for phase field models, they have been successfully extended to various conservative systems, including Hamiltonian systems \eqref{eq-1-1} \cite{bo-22-EIEQ-MS,cai-20-linear-MS-JCP,hou-24-CF-CNLW-ANM,jiang-19-SG-IEQ-JSC,jiang-20-CH-SAV-JSC,zhang-20-RK-JCP}. These two methods share similar ideas in constructing linear schemes. To facilitate a comparative study with the Lagrange multiplier approach, we present a sketch of the SAV method. Let $F(z)=\big(N(z),1\big)$, the SAV method introduces an additional variable $r(t)=\sqrt{F(z)+c_0}$, under the restriction that $F(z)$ is bounded from below. The constant $c_0$ is selected to ensure that $F(z)+c_0>0$. Then, the system \eqref{eq-1-1} can be reformulated as
\begin{align}\label{eq-1-3}
	\aligned
	z_t&=\mathcal{S}\big(\mathcal{L}z+\frac{N'(z)}{\sqrt{F(z)+c_0}}r\big)\\
	r_t&=\frac{1}{2}\big(\frac{N'(z)}{\sqrt{F(z)+c_0}},z_t\big).
	\endaligned
\end{align}
The system \eqref{eq-1-3} satisfies a modified energy conservation law
\begin{equation}\label{eq-1-4}
	\frac{d}{dt}~\Big(\frac{1}{2}\big(z,\mathcal{L}z\big)+r^2\Big)=0.
\end{equation} 
The SAV and IEQ methods transform the original energy \eqref{eq-1-2} into a simple quadratic form, thereby simplifying the treatment of the nonlinear term. Then, a second-order linearly implicit energy-preserving scheme reads
\begin{equation}\label{eq-1-5}
	\begin{aligned}
			\frac{z^{n+1}-z^n}{\Delta t}&=\mathcal{S}\big(\mathcal{L}z^{n+\frac{1}{2}}+\frac{N'(\tilde{z}^{n+\frac{1}{2}})}{\sqrt{F(\tilde{z}^{n+\frac{1}{2}})+c_0}}r^{n+\frac{1}{2}}\big),\\
			\frac{r^{n+1}-r^n}{\Delta t}&=\frac{1}{2}\big(\frac{N'(\tilde{z}^{n+\frac{1}{2}})}{\sqrt{F(\tilde{z}^{n+\frac{1}{2}})+c_0}},\frac{z^{n+1}-z^n}{\Delta t}\big),
		\end{aligned}
\end{equation}
where $\Delta t$ is the time step, $z^{n+\frac{1}{2}}=\frac{z^n+z^{n+1}}{2}$ and $\tilde{z}^{n+\frac{1}{2}}=\frac{3z^n-z^{n-1}}{2}$. The scheme \eqref{eq-1-5} is named SAV-CN. It is not difficult to prove that SAV-CN is energy-preserving in the sense that 
\begin{equation}\label{eq-1-6}
	\mathcal{H}^{n+1}=\mathcal{H}^n,\quad\mathcal{H}^n=\frac{1}{2}\big(z^n,\mathcal{L}z^n\big)+\big(r^n\big)^2.
\end{equation}
Note that $r^2\neq F(z)$ when $c_0\neq 0$, the modified energy \eqref{eq-1-4} differs from the original energy \eqref{eq-1-2}. Consequently, the scheme \eqref{eq-1-5} does not preserve the original energy. Moreover, implementing \eqref{eq-1-5} numerically requires solving a linear system with variable coefficients, which in turn relies on nonlinear iterations and increases computational costs. Higher-order linear schemes \cite{akrivis-19-RK-SAV-SIAMJSC,bo-22-ESAV-NA,feng-21-SAV-Gauss-SINA,li-20-linear-high-order-SAV-NLW-JSC,li-21-high-order-SAV-NLS-JSC} can be constructed using symplectic RK methods and high-order extrapolation formulas. Although both the SAV and IEQ methods lead to linear schemes, the calculation of the solution variable and the auxiliary variable in the resulting schemes can not be decoupled. Then, extra nonlinear iterations must be applied previously to obtain the solution variable, which would become more complicated for high-order schemes. 

Recently, supplementary variable methods \cite{gong-21-SVM-CMAME} have been developed to construct linear schemes for general energies by adding extra terms to the original vector field to enforce the conservation of energy. In this paper, inspired by \cite{cheng-20-LM-CMAME}, we adopt an alternative approach wherein the Lagrange multiplier is multiplied directly to the nonlinear term without introducing extra terms, resulting in a modified system that is structurally simpler. The remarkable features are given as
\begin{itemize}
	\item[{i.}] The Lagrange multiplier is not based on any restrictions, so it is applicable to general Hamiltonian PDEs.
	\item[{ii.}] In both continuous and discrete cases, the new approach conserves the original energy \eqref{eq-1-2}, as opposed to a modified energy in the auxiliary variable methods.
	\item[{iii.}] We present a novel class of energy-preserving schemes that are computationally efficient and capable of arbitrary order accuracy. Although solving for the Lagrange multiplier is necessary, its computational cost is shown to be negligible.
	\item[{iv.}] The methodology of this paper can be simply generalized to other conservative or dissipative systems.
\end{itemize}

The remainder of this paper is structured as follows. Section \ref{sec:main} details the main results, starting with the Lagrange multiplier approach and a simple second-order linear scheme. Then, we develop high-order linearly implicit energy-preserving schemes via a prediction-correction strategy. This is followed by rigorous analyses of the original energy conservation and numerical accuracy for all proposed schemes. In Section \ref{sec:experiments}, numerical tests are provided to illustrate the effectiveness, convergence, and structure-preserving properties of the proposed schemes. Some concluding remarks are given in Section \ref{sec:conclusions}. 

\section{Main results}\label{sec:main}
We introduce below a novel Lagrange multiplier approach for Hamiltonian systems \eqref{eq-1-1}. The multiplier is multiplied directly on the nonlinear term of the original vector field without adding additional variables and terms.
\subsection{The Lagrange multiplier approach and its second-order scheme} 
Defining a scalar function $\lambda(t)$, the system \eqref{eq-1-1} can be rewritten as
\begin{align}\label{eq-2-1}
	\aligned
	z_t&=\mathcal{S}\big(\mathcal{L}z+\lambda(t)N'(z)\big),\\
	\frac{d}{dt}F(z)&=\lambda(t)\big(N'(z),z_t\big),
	\endaligned
\end{align}
where $F(z)=\big(N(z),1\big)$ is the nonlinear part of \eqref{eq-1-2}. If we set the initial condition $\lambda(0)=1$, then the new system \eqref{eq-2-1} is equivalent to \eqref{eq-1-1}, i.e., $\lambda(t)\equiv 1$ in \eqref{eq-2-1}. 
\begin{theorem}\label{Th-2-1}
The system \eqref{eq-2-1} satisfies the original energy conservation law
\begin{equation}\label{eq-2-2}
	\frac{d}{dt}~\Big(\frac{1}{2}\big(z,\mathcal{L}z\big)+\big(\mathcal{N}(z),1\big)\Big)=0.
\end{equation} 
\end{theorem}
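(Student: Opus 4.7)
The plan is to differentiate the energy $\mathcal{H}(z) = \tfrac{1}{2}(z,\mathcal{L}z) + (N(z),1)$ in time, split it into the quadratic and nonlinear contributions, and then use the two equations in \eqref{eq-2-1} together with the structural assumptions on $\mathcal{L}$ and $\mathcal{S}$ to collapse the result to zero. No delicate estimates or clever ansatz should be needed; the argument is essentially an application of integration by parts in the $L^2$ inner product combined with skew-adjointness.

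First I would handle the quadratic part. Since $\mathcal{L}$ is symmetric, differentiating gives
\begin{equation*}
\frac{d}{dt}\,\tfrac{1}{2}(z,\mathcal{L}z) = (z_t,\mathcal{L}z).
\end{equation*}
Next, for the nonlinear part I would invoke the second equation of \eqref{eq-2-1} directly, which by definition of $F$ gives
\begin{equation*}
\frac{d}{dt}\,(N(z),1) = \frac{d}{dt}F(z) = \lambda(t)\,(N'(z),z_t).
\end{equation*}
Adding these two identities yields
\begin{equation*}
\frac{d}{dt}\mathcal{H}(z) = \bigl(z_t,\,\mathcal{L}z + \lambda(t)N'(z)\bigr).
\end{equation*}

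Finally, I would substitute the first equation of \eqref{eq-2-1} into the right-hand side to obtain
\begin{equation*}
\frac{d}{dt}\mathcal{H}(z) = \bigl(\mathcal{S}(\mathcal{L}z + \lambda(t)N'(z)),\,\mathcal{L}z + \lambda(t)N'(z)\bigr),
\end{equation*}
which vanishes because $\mathcal{S}$ is skew-adjoint, so $(\mathcal{S}u,u)=0$ for every admissible $u$. This closes the argument.

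Honestly, there is no real obstacle here; the only thing to be careful about is to use the \emph{pair} of equations in \eqref{eq-2-1} rather than treating $\lambda(t)\equiv 1$ a priori, since the whole point of the Lagrange multiplier reformulation is that energy conservation of \eqref{eq-2-1} should hold \emph{for any} $\lambda(t)$ consistent with the coupled system, not just the trivial value. I would also briefly note that the theorem as stated should read $N(z)$ rather than $\mathcal{N}(z)$ to be consistent with the notation introduced in \eqref{eq-1-2}.
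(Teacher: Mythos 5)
Your argument is correct and is essentially the same computation as the paper's one-line proof: both reduce to pairing $z_t$ with $\mathcal{L}z+\lambda(t)N'(z)$, invoking the second equation of \eqref{eq-2-1} for the nonlinear part, and killing the remaining term by the skew-adjointness of $\mathcal{S}$. Your side remark that $\mathcal{N}(z)$ in the statement should read $N(z)$ is also well taken.
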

\begin{proof}
Taking the inner product of the first equation in \eqref{eq-2-1} with $\mathcal{L}z+\lambda(t) N'(z)$ and summing it with the second equation, we obtain \eqref{eq-2-2}.
\end{proof}

Note that unlike the SAV method \eqref{eq-1-3} where $r(t)$ is just an auxiliary variable, $\lambda(t)$ here is to serve as a Lagrange multiplier to enforce conservation of the original energy. Furthermore, $F(z)$ does not require to be bounded from below. Similar to the SAV method, we can construct efficient numerical schemes for \eqref{eq-2-1}. For example, a novel second-order scheme based on the Crank-Nicolson method is as follows:
\begin{algorithm}[H]
\begin{align}\label{eq-2-3}
	\aligned
	\frac{z^{n+1}-z^n}{\Delta t}&=\mathcal{S}\big(\mathcal{L}z^{n+\frac{1}{2}}+\lambda^{n+\frac{1}{2}}N'(\tilde{z}^{n+\frac{1}{2}})\big),\\
	\frac{F(z^{n+1})-F(z^n)}{\Delta t}&=\lambda^{n+\frac{1}{2}}\big(N'(\tilde{z}^{n+\frac{1}{2}}),\frac{z^{n+1}-z^n}{\Delta t}\big).
	\endaligned
\end{align}
\end{algorithm}
\noindent Hereafter, the scheme \eqref{eq-2-3} is named LM-CN. There is the following energy conservation.
\begin{theorem}\label{Th-2-2}
The scheme LM-CN \eqref{eq-2-3} satisfies the semi-discrete original energy conservation law
\begin{equation}\label{eq-2-4}
	\mathcal{H}^{n+1}=\mathcal{H}^n,\quad  \mathcal{H}^n=\frac{1}{2}\big(z^n,\mathcal{L}z^n\big)+\big(N(z^n),1\big).
\end{equation} 
\end{theorem}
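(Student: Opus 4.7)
The plan is to mimic at the discrete level the continuous argument used in the proof of \cref{Th-2-1}, exploiting the fact that the Crank--Nicolson midpoint and the symmetry/skew-adjointness structure combine to yield telescoping identities for each piece of $\mathcal{H}^n$.

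First, I would take the $L^2$ inner product of the first equation of \eqref{eq-2-3} with the quantity $\mathcal{L}z^{n+\frac{1}{2}}+\lambda^{n+\frac{1}{2}}N'(\tilde{z}^{n+\frac{1}{2}})$. Since $\mathcal{S}$ is skew-adjoint and independent of $z$, the right-hand side vanishes upon pairing, giving
\begin{equation*}
\bigl(\tfrac{z^{n+1}-z^n}{\Delta t},\mathcal{L}z^{n+\frac{1}{2}}\bigr)+\lambda^{n+\frac{1}{2}}\bigl(N'(\tilde{z}^{n+\frac{1}{2}}),\tfrac{z^{n+1}-z^n}{\Delta t}\bigr)=0.
\end{equation*}
This is the discrete analogue of taking the inner product with $\mathcal{L}z+\lambda(t)N'(z)$ in the continuous setting.

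Next, I would handle the two terms separately. For the quadratic term, the symmetry of $\mathcal{L}$ together with the midpoint identity gives the exact telescoping
\begin{equation*}
\bigl(\tfrac{z^{n+1}-z^n}{\Delta t},\mathcal{L}z^{n+\frac{1}{2}}\bigr)=\tfrac{1}{2\Delta t}\bigl[(z^{n+1},\mathcal{L}z^{n+1})-(z^n,\mathcal{L}z^n)\bigr].
\end{equation*}
For the nonlinear term, I would substitute the second equation of \eqref{eq-2-3} directly to replace $\lambda^{n+\frac{1}{2}}(N'(\tilde{z}^{n+\frac{1}{2}}),\tfrac{z^{n+1}-z^n}{\Delta t})$ by $\tfrac{F(z^{n+1})-F(z^n)}{\Delta t}$, i.e.\ by $\tfrac{1}{\Delta t}\bigl[(N(z^{n+1}),1)-(N(z^n),1)\bigr]$ since $F(z)=(N(z),1)$.

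Combining these two substitutions and multiplying through by $\Delta t$ yields $\mathcal{H}^{n+1}-\mathcal{H}^n=0$, which is \eqref{eq-2-4}. There is no genuine obstacle here: the design of the second equation of the scheme is precisely tailored so that the nonlinear contribution telescopes exactly, which is the whole point of introducing the Lagrange multiplier. The only small technical care needed is to verify that the multiplier $\lambda^{n+\frac{1}{2}}$ can indeed be factored out of the inner product, which is immediate because it is scalar and thus commutes with $(\cdot,\cdot)$; this mirrors the continuous derivation and requires no bound or sign assumption on $F(z)$.
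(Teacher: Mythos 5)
Your argument is correct and is exactly the paper's proof, just spelled out in more detail: the paper likewise pairs the first equation with $\mathcal{L}z^{n+\frac{1}{2}}+\lambda^{n+\frac{1}{2}}N'(\tilde{z}^{n+\frac{1}{2}})$, uses the skew-adjointness of $\mathcal{S}$, and adds the second equation to make both the quadratic and nonlinear parts telescope. No gaps; your explicit midpoint identity for the $\mathcal{L}$-term is the only step the paper leaves implicit.
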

\begin{proof}
Computing the inner product of the first equation in \eqref{eq-2-3} with $\mathcal{L}z^{n+\frac{1}{2}}+\lambda^{n+\frac{1}{2}}N'(\tilde{z}^{n+\frac{1}{2}})$ and summing it with the second equation yields the preservation of original energy \eqref{eq-2-4}.
\end{proof}

We present an efficient method for solving the scheme LM-CN \eqref{eq-2-3}, beginning with a derivation of
\begin{equation}\label{eq-2-5}
	z^{n+1}=p^{n}+\lambda^{n+\frac{1}{2}}q^n,
\end{equation} 
where
\begin{equation}\label{eq-2-6}
	p^n=(I-\frac{1}{2}\Delta t\mathcal{S}\mathcal{L})^{-1}(I+\frac{1}{2}\Delta t\mathcal{S}\mathcal{L})z^n,\ q^n=\Delta t(I-\frac{1}{2}\Delta t\mathcal{S}\mathcal{L})^{-1}\mathcal{S}N'(\tilde{z}^{n+\frac{1}{2}}).
\end{equation}
The values of $p^n$ and $q^n$ can be computed explicitly. Substituting \eqref{eq-2-5} into the second equation of \eqref{eq-2-3}, we obtain
\begin{equation}\label{eq-2-7}
	\big(N(p^{n}+\lambda^{n+\frac{1}{2}}q^n)-N(z^n),1\big)=\lambda^{n+\frac{1}{2}}\big(N'(\tilde{z}^{n+\frac{1}{2}}),p^{n}+\lambda^{n+\frac{1}{2}}q^n-z^n\big).
\end{equation}  
As the equation \eqref{eq-2-7} is a nonlinear algebraic equation, it generally admits multiple solutions. However, since $\lambda^{n+\frac{1}{2}}$ approximates $1$, we can compute it efficiently using the Newton iteration with $1$ as the initial value. To summarize, the scheme LM-CN \eqref{eq-2-3} consists of the following steps:
\begin{algorithm}[H]
\begin{itemize}
	\item[{\textbf{$\star$}}]\textbf{step 1:} compute $p^n$ and $q^n$ from \eqref{eq-2-6};
	\item[{\textbf{$\star$}}]\textbf{step 2:} search $\lambda^{n+\frac{1}{2}}$ by solving \eqref{eq-2-7};
	\item[{\textbf{$\star$}}]\textbf{step 3:} obtain $z^{n+1}$ by \eqref{eq-2-5} and then proceed to the next step.
\end{itemize}
\end{algorithm}

The notation $(\cdot,\cdot)$ in \eqref{eq-2-7} denotes the $L^2$ inner product. Following spatial discretization, this is evaluated using the discrete $L^2$ inner product \cite{shen-11-spectral}. Note that only the linear constant-coefficient equations need to be solved, along with the algebraic equation \eqref{eq-2-7}. Since the computational cost of determining the Lagrange multiplier is negligible compared to solving \eqref{eq-2-6}, the scheme LM-CN \eqref{eq-2-3} is computationally efficient. Moreover, when combined with Fourier pseudo-spectral spatial discretization, the computations in \eqref{eq-2-6} can be significantly accelerated using fast Fourier transforms.

\subsection{High-order linearly implicit energy-preserving schemes}
This subsection shows how to easily derive higher-order linear schemes for solving \eqref{eq-2-1}. The proposed method utilizes the Gauss collocation methods integrated with a prediction-correction framework. For this purpose, we first introduce the following high-order prediction-correction scheme.
\subsubsection{The high-order prediction-correction scheme}
To facilitate the presentation, we recast \eqref{eq-1-1} in the following form
\begin{equation}\label{eq-2-8}
	z_t=f_1(z)+f_2(z),
\end{equation}
where $f_1(z)=\mathcal{S}\mathcal{L}z$ and $f_2(z)=\mathcal{S}N'(z)$. Applying the $s$-stage RK method \cite{hairer-06-GNI-ODE} with coefficients $a_{ij}$, $b_i$, $c_i$ ($i,j=1,2,\cdots,s$) to \eqref{eq-2-8} on the time interval $[t_n,t_{n+1}]$ leads to the following scheme
\begin{equation}\label{eq-2-9}
	\begin{aligned}
		z_i^n&=z^n+\Delta t\sum_{j=1}^sa_{ij}k_j^n,\\
		k_i^n&=f_1\big(z_i^n\big)+f_2\big(z_i^n\big),\\
		z^{n+1}&=z^n+\Delta t\sum_{i=1}^sb_ik_i^n.
	\end{aligned}
\end{equation}

Here, we allow a full matrix $(a_{ij})_{i,j=1}^s$ of non-zero coefficients. It should be noted, however, that solving such a fully implicit scheme can be computationally challenging. A nonlinear iterative solver must be employed, which entails substantial computational expenses. In particular, HBVM($s$,$s$), which is known to be equivalent to an $s$-stage Gauss method, requires the design of efficient nonlinear solvers, especially for high-dimensional problems \cite{luigi-11-efficient-HBVM-JCAM}. To mitigate this, we present the following high-order prediction-correction scheme: 
\begin{itemize}
	\item[{\textbf{$\star$}}]\textbf{step1 (prediction):} set $z_{i,(0)}^n=z^n$ and compute $k_{i,(m+1)}^n$ from $m=0$ to $\Lambda-1$ by solving
	\begin{equation}\label{eq-2-10}
	\begin{aligned}
		z_{i,(m+1)}^n&=z^n+\Delta t\sum_{j=1}^sa_{ij}k_{j,(m+1)}^n,\\ k_{i,(m+1)}^n&=f_1\big(z_{i,(m+1)}^n\big)+f_2\big(z_{i,(m)}^n\big),
		\end{aligned}
	\end{equation}
	\item[{\textbf{$\star$}}]\textbf{step 2 (correction):} update $z_{(\Lambda)}^{n+1}$ by
	\begin{equation}\label{eq-2-11}
		z_{(\Lambda)}^{n+1}=z^n+\Delta t\sum_{i=1}^sb_ik_{i,(\Lambda)}^n.
	\end{equation}
\end{itemize}
\noindent The following lemma holds for the two RK schemes described above.
\begin{lemma}\label{Le-2-3}
Assuming that $z^{n+1}$ and $z_{(\Lambda)}^{n+1}$ are solutions to the schemes \eqref{eq-2-9} and \eqref{eq-2-11}, respectively, it follows that
\begin{equation}\label{eq-2-12}
	z_{(\Lambda)}^{n+1}-z^{n+1}=\mathcal{O}\big((\Delta t)^{\Lambda+1}\big).
\end{equation} 
\end{lemma}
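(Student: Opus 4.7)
The plan is to show that each prediction--correction sweep gains one power of $\Delta t$ in the stage errors, so that after $\Lambda$ sweeps the iterates agree with the fully implicit RK stages up to $\mathcal{O}((\Delta t)^{\Lambda+1})$, and this accuracy is inherited by the update \eqref{eq-2-11}.

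First I would introduce the stage error $e_{i,(m)}^n := z_i^n - z_{i,(m)}^n$, where $z_i^n$ solves \eqref{eq-2-9} and $z_{i,(m)}^n$ is the $m$-th iterate produced by \eqref{eq-2-10}. Eliminating $k_{i,(m+1)}^n$ and subtracting from the corresponding stage equation in \eqref{eq-2-9} yields
\begin{equation*}
e_{i,(m+1)}^n = \Delta t \sum_{j=1}^{s} a_{ij}\bigl(f_1(z_j^n) - f_1(z_{j,(m+1)}^n)\bigr) + \Delta t \sum_{j=1}^{s} a_{ij}\bigl(f_2(z_j^n) - f_2(z_{j,(m)}^n)\bigr).
\end{equation*}
Since $f_1 = \mathcal{S}\mathcal{L}$ is linear and $f_2 = \mathcal{S}N'$ is locally Lipschitz in a neighbourhood of the exact RK stages, this gives $\|e_{i,(m+1)}^n\| \leq C_1\Delta t \sum_j \|e_{j,(m+1)}^n\| + C_2\Delta t \sum_j \|e_{j,(m)}^n\|$. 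For $\Delta t$ sufficiently small, the implicit term on the right is absorbed into the left-hand side, producing the contraction-type estimate $\max_i \|e_{i,(m+1)}^n\| \leq C\,\Delta t \max_j \|e_{j,(m)}^n\|$.

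Next I would induct on $m$. Because $z_{i,(0)}^n = z^n$, the stage equation in \eqref{eq-2-9} immediately yields the base case $\|e_{i,(0)}^n\| = \|z_i^n - z^n\| = \mathcal{O}(\Delta t)$. Feeding this into the contraction estimate step by step then produces $\|e_{i,(m)}^n\| = \mathcal{O}((\Delta t)^{m+1})$ for $m = 0, 1, \ldots, \Lambda$.

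Finally, subtracting \eqref{eq-2-11} from the last line of \eqref{eq-2-9} leaves $z^{n+1} - z_{(\Lambda)}^{n+1} = \Delta t \sum_i b_i (k_i^n - k_{i,(\Lambda)}^n)$. Because $k_{i,(\Lambda)}^n$ evaluates $f_1$ at $z_{i,(\Lambda)}^n$ but $f_2$ at the one-step-lagged iterate $z_{i,(\Lambda-1)}^n$, a Lipschitz bound yields $\|k_i^n - k_{i,(\Lambda)}^n\| \leq C\bigl(\|e_{i,(\Lambda)}^n\| + \|e_{i,(\Lambda-1)}^n\|\bigr) = \mathcal{O}((\Delta t)^{\Lambda})$, and multiplication by $\Delta t$ produces \eqref{eq-2-12}. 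I expect the only delicate point to be the absorption step, which rests on a mild smallness assumption on $\Delta t$ relative to $\|\mathcal{S}\mathcal{L}\|$ and the local Lipschitz constant of $N'$; everything else is a routine deferred-correction-style induction.
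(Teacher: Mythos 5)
Your proof is correct, and it reaches the key intermediate estimate $z_{i,(m)}^n-z_i^n=\mathcal{O}\big((\Delta t)^{m+1}\big)$ by a genuinely different route than the paper. The paper treats the stages $z_{i,(m)}^n$ and $z_i^n$ as smooth functions of $\Delta t$ and argues that all derivatives of their difference up to order $m$ vanish at $\Delta t=0$, so the estimate follows from Taylor expansion; you instead set up the stage-error recursion, invoke linearity of $f_1$ and a local Lipschitz bound on $f_2$, absorb the implicit term for small $\Delta t$, and run a contraction-type induction from the base case $e_{i,(0)}^n=z_i^n-z^n=\mathcal{O}(\Delta t)$. The final step---writing $z_{(\Lambda)}^{n+1}-z^{n+1}=\Delta t\sum_i b_i(k_i^n-k_{i,(\Lambda)}^n)$ and noting that the $f_2$-part lags one iterate behind, so the overall gain is one power of $\Delta t$---is identical in both arguments. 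Your version buys something concrete: it requires only local Lipschitz continuity of $N'$ rather than $\Lambda$-fold differentiability of the stages in $\Delta t$, and it makes explicit the smallness condition on $\Delta t$ (relative to $\|\mathcal{S}\mathcal{L}\|$ and the Lipschitz constant) that the paper's derivative-counting argument leaves implicit in the assumed solvability and smooth $\Delta t$-dependence of the implicit stage equations. The paper's version is more compact but more terse, asserting the vanishing of the derivatives ``by the chain rule'' without detail. One shared caveat, which applies equally to both proofs and which you correctly flag: at the space-continuous level $\mathcal{S}\mathcal{L}$ is unbounded, so the constants (and hence the admissible $\Delta t$) are really only finite after spatial discretization; this does not affect the validity of the lemma as the paper intends it.
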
 

\begin{proof}
From \eqref{eq-2-9} and \eqref{eq-2-11}, we can first obtain
\begin{equation}\label{eq-2-13}
	z_{(\Lambda)}^{n+1}-z^{n+1}=\Delta t\sum_{i=1}^sb_i\big(k_{i,(\Lambda)}^n-k_i^n)=\Delta t\sum_{i=1}^sb_i(g_{1,i}^n+g_{2,i}^n),
\end{equation}
where 
\begin{equation}\label{eq-2-14}
	g_{1,i}^n=f_1\big(z_{i,(\Lambda)}^n)-f_1(z_i^n\big),\quad g_{2,i}^n=f_2\big(z_{i,(\Lambda-1)}^n\big)-f_2\big(z_i^n\big).
\end{equation}
Note that $z_{i,(m)}^n$ and $z_i^n$ can be considered functions of the time step $\Delta t$. For $\Delta t = 0$, it follows directly that
\begin{equation*}
	z_{i,(m)}^n=z_i^n=z^n,\quad i=1,\cdots,s,~m=0,1,\cdots,\Lambda.
\end{equation*}
Differentiating the first two equations in \eqref{eq-2-9} and \eqref{eq-2-10} with respect to $\Delta t$ and applying the chain rule yields
\begin{equation*}
	\frac{d^{q}}{d(\Delta t)^{q}}\big(z_{i,(m)}^n-z_i^n\big)\bigg|_{\Delta t=0}=0,\quad q=0,1,\cdots,m.
\end{equation*}
Here the number $q$ denotes the order of derivatives. It immediately leads to 
\begin{equation}\label{eq-2-15}
	z_{i,(m)}^n-z_i^n=\mathcal{O}\big((\Delta t)^{m+1}\big),\quad m=\Lambda-1~\mbox{or}~\Lambda.
\end{equation}
Substituting \eqref{eq-2-15} into \eqref{eq-2-14} and considering the smoothness of $f_1$ and $f_2$, we obtain the following equation via the Taylor expansion as
\begin{equation*}
	g_{1,i}^n+g_{2,i}^n=\mathcal{O}\big((\Delta t)^{\Lambda+1}\big)+\mathcal{O}\big((\Delta t)^\Lambda\big)=\mathcal{O}\big((\Delta t)^\Lambda\big).
\end{equation*}
Combining \eqref{eq-2-13} with the identity $\sum_{i=1}^sb_i=1$, we have
\begin{equation*}
	z_{(\Lambda)}^{n+1}-z^{n+1}=\mathcal{O}\big((\Delta t)^{\Lambda+1}\big).
\end{equation*}
This completes the proof.
\end{proof}

\begin{theorem}\label{Th-2-4}
Given that the underlying RK method is of order $p$, the scheme \eqref{eq-2-10}-\eqref{eq-2-11} possesses an order of $\min\{p,\Lambda\}$, i.e.,
\begin{equation}\label{eq-2-16}
	z_{(\Lambda)}^{n+1}-z(t_{n+1})=\mathcal{O}\big((\Delta t)^{\min\{p,\Lambda\}+1}\big).
\end{equation}
\end{theorem}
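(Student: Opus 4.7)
The plan is to deduce this as a short consequence of Lemma \ref{Le-2-3} together with the standing assumption that the underlying Runge--Kutta scheme \eqref{eq-2-9} has classical order $p$. Concretely, I would write
\begin{equation*}
	z_{(\Lambda)}^{n+1}-z(t_{n+1})=\bigl(z_{(\Lambda)}^{n+1}-z^{n+1}\bigr)+\bigl(z^{n+1}-z(t_{n+1})\bigr),
\end{equation*}
so the task reduces to controlling the two parenthesized differences separately.

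For the first term, Lemma \ref{Le-2-3} directly gives $z_{(\Lambda)}^{n+1}-z^{n+1}=\mathcal{O}((\Delta t)^{\Lambda+1})$, so no extra work is needed there. For the second term, I would invoke the hypothesis that \eqref{eq-2-9} is a $p$-th order RK discretization of \eqref{eq-2-8}, so that its local truncation error satisfies $z^{n+1}-z(t_{n+1})=\mathcal{O}((\Delta t)^{p+1})$ provided we start from the exact value at $t_n$ (the standard convention for stating local accuracy). Adding the two estimates and taking the dominant (i.e.\ lower) power of $\Delta t$ yields
\begin{equation*}
	z_{(\Lambda)}^{n+1}-z(t_{n+1})=\mathcal{O}\bigl((\Delta t)^{\Lambda+1}\bigr)+\mathcal{O}\bigl((\Delta t)^{p+1}\bigr)=\mathcal{O}\bigl((\Delta t)^{\min\{p,\Lambda\}+1}\bigr),
\end{equation*}
which is exactly \eqref{eq-2-16}.

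There is really no hard step here: the whole argument is a triangle inequality, with Lemma \ref{Le-2-3} doing the nontrivial work. The only point that deserves a sentence of care in the write-up is the interpretation of \eqref{eq-2-16} as a local error estimate (i.e.\ assuming $z^n$ coincides with $z(t_n)$), together with the remark that the smoothness of $f_1$ and $f_2$ used in Lemma \ref{Le-2-3} is the same regularity that underlies the order-$p$ statement for the RK base method, so both estimates are valid under a common set of hypotheses. Promoting this to a global convergence estimate of order $\min\{p,\Lambda\}$ is then the usual consequence obtained by summing the local errors over $\mathcal{O}(1/\Delta t)$ steps.
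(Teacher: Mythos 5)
Your argument is correct and follows essentially the same route as the paper: the triangle-inequality decomposition into $(z_{(\Lambda)}^{n+1}-z^{n+1})+(z^{n+1}-z(t_{n+1}))$, with Lemma \ref{Le-2-3} bounding the first term by $\mathcal{O}((\Delta t)^{\Lambda+1})$ and the order-$p$ local error of the base RK method bounding the second by $\mathcal{O}((\Delta t)^{p+1})$, is exactly what the paper does. Your additional remark on interpreting \eqref{eq-2-16} as a local error estimate under the convention $z^n=z(t_n)$ is a reasonable clarification but does not change the substance.
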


\begin{proof}
If the underlying RK method is of order $p$, it holds for \eqref{eq-2-9} that
\begin{equation*}
	z^{n+1}-z(t_{n+1})=\mathcal{O}\big((\Delta t)^{p+1}\big).
\end{equation*} 
Combining this result with \eqref{eq-2-12} yields \eqref{eq-2-16}, which completes the proof.
\end{proof}

\subsubsection{The high-order Lagrange multiplier scheme}
In general, the above prediction-correction scheme \eqref{eq-2-10}-\eqref{eq-2-11} typically fails to conserve the energy. To address this issue, we employ it to develop a high-order Lagrange multiplier scheme. We then analyze the original energy conservation and numerical accuracy of the proposed scheme. Specifically, a high-order prediction step is applied to \eqref{eq-2-8}, followed by a high-order correction step applied to the Lagrange multiplier system \eqref{eq-2-1}. This procedure yields the high-order linear scheme as follows:
\begin{algorithm}[H]
\begin{itemize}
	\item[{\textbf{$\star$}}]\textbf{step1 (prediction):} set $z_{i,(0)}^n=z^n$ and compute $z_{i,(m+1)}^n$ from $m=0$ to $\Lambda-1$ by solving the linear system
	\begin{equation}\label{eq-2-17}
		\begin{aligned}
			z_{i,(m+1)}^n&=z^n+\Delta t\sum_{j=1}^sa_{ij}k_{j,(m+1)}^n,\\ k_{i,(m+1)}^n&=f_1\big(z_{i,(m+1)}^n\big)+f_2\big(z_{i,(m)}^n\big).
		\end{aligned}
	\end{equation}
	Then, we take $\tilde{z}_i^n=z_{i,(\Lambda)}^n$ for all $i=1,\cdots,s$.
	\item[{\textbf{$\star$}}]\textbf{step 2 (correction):} update $z^{n+1}$ by
	\begin{equation}\label{eq-2-18}
		\begin{aligned}
			z_i^n&=z^n+\Delta t\sum_{j=1}^sa_{ij}k_j^n,\\
			k_i^n&=f_1\big(z_i^n\big)+\lambda^nf_2\big(\tilde{z}_i^n\big),\\
			z^{n+1}&=z^n+\Delta t\sum_{i=1}^sb_ik_i^n,\\
			F(z^{n+1})&=F(z^n)+\lambda^n\Delta t\sum_{i=1}^sb_i\big(N'(\tilde{z}_i^n),k_i^n\big).
		\end{aligned}
	\end{equation}
\end{itemize}
\end{algorithm}
\noindent Note that $F(z^n)=\big(N(z^n),1\big)$. This scheme admits an energy conservation law of the following form.
\begin{theorem}\label{Th-2-5}
The scheme \eqref{eq-2-17}-\eqref{eq-2-18} satisfies the semi-discrete original energy conservation law
\begin{equation}\label{eq-2-19}
	\mathcal{H}^{n+1}=\mathcal{H}^n,\quad  \mathcal{H}^n=\frac{1}{2}\big(z^n,\mathcal{L}z^n\big)+\big(N(z^n),1\big).
\end{equation} 
\end{theorem}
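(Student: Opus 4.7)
The plan is to split the proof along the decomposition $\mathcal{H}(z) = \tfrac{1}{2}(z,\mathcal{L}z) + F(z)$. The nonlinear part is already controlled by construction: the last equation of \eqref{eq-2-18} gives $F(z^{n+1}) - F(z^n) = \lambda^n\Delta t \sum_i b_i(N'(\tilde z_i^n), k_i^n)$ directly. So the entire task reduces to showing that the quadratic part evolves as the negative of this quantity, i.e.
\begin{equation*}
\tfrac{1}{2}(z^{n+1},\mathcal{L}z^{n+1}) - \tfrac{1}{2}(z^n,\mathcal{L}z^n) = -\lambda^n \Delta t \sum_{i=1}^s b_i\bigl(N'(\tilde z_i^n), k_i^n\bigr).
\end{equation*}

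To prove this identity, I would first expand the left-hand side using $z^{n+1} = z^n + \Delta t \sum_i b_i k_i^n$ together with the symmetry of $\mathcal{L}$, obtaining a linear term $\Delta t \sum_i b_i(z^n,\mathcal{L}k_i^n)$ and a quadratic term $\tfrac{(\Delta t)^2}{2}\sum_{i,j} b_i b_j (k_i^n,\mathcal{L}k_j^n)$. Next I would use the stage equation $z^n = z_i^n - \Delta t \sum_j a_{ij} k_j^n$ in the linear term, turning it into $\Delta t \sum_i b_i(z_i^n,\mathcal{L}k_i^n)$ plus a quadratic cross-term $-(\Delta t)^2 \sum_{i,j} b_i a_{ij}(k_j^n,\mathcal{L}k_i^n)$. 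Symmetrizing over $i,j$ via the symmetry of $\mathcal{L}$, the two $(\Delta t)^2$ contributions combine into
\begin{equation*}
\tfrac{(\Delta t)^2}{2}\sum_{i,j}(b_i b_j - b_i a_{ij} - b_j a_{ji})\,(k_i^n,\mathcal{L}k_j^n),
\end{equation*}
which vanishes by the symplecticity relation $b_i a_{ij} + b_j a_{ji} - b_i b_j = 0$ satisfied by Gauss collocation methods.

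What remains is $\Delta t \sum_i b_i(z_i^n, \mathcal{L}k_i^n)$. Here I would substitute $k_i^n = \mathcal{S}\mathcal{L}z_i^n + \lambda^n \mathcal{S}N'(\tilde z_i^n)$ and use the skew-adjointness of $\mathcal{S}$: the pure linear piece $(\mathcal{L}z_i^n, \mathcal{S}\mathcal{L}z_i^n)$ vanishes, leaving $\lambda^n(\mathcal{L}z_i^n, \mathcal{S}N'(\tilde z_i^n))$. A parallel computation on $(N'(\tilde z_i^n), k_i^n)$ — where the term $\lambda^n(N'(\tilde z_i^n), \mathcal{S}N'(\tilde z_i^n))$ vanishes by skew-adjointness — yields $(N'(\tilde z_i^n), k_i^n) = (N'(\tilde z_i^n), \mathcal{S}\mathcal{L}z_i^n) = -(\mathcal{L}z_i^n, \mathcal{S}N'(\tilde z_i^n))$. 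Thus $(z_i^n, \mathcal{L}k_i^n) = -\lambda^n (N'(\tilde z_i^n), k_i^n)$, which is exactly the identity needed. Adding the evolution of $F$ from the final equation of \eqref{eq-2-18} then produces $\mathcal{H}^{n+1} = \mathcal{H}^n$.

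The main obstacle is the cancellation of the $(\Delta t)^2$ cross-terms in the quadratic part; everything else is a careful bookkeeping of inner products using the algebraic properties of $\mathcal{S}$ and $\mathcal{L}$. The crucial ingredient is that the correction stage \eqref{eq-2-18} is built on a symplectic (Gauss) RK method, so the quadratic invariant associated with $\mathcal{L}$ is preserved up to the driving term $\mathcal{S}N'(\tilde z_i^n)$, which the Lagrange multiplier equation is specifically designed to compensate. Notably, the predicted stages $\tilde z_i^n$ enter only through the nonlinear channel and their approximation quality plays no role in the exact energy balance — they merely influence the accuracy analyzed separately.
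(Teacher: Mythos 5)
Your proof is correct and follows essentially the same route as the paper's: expand the quadratic form along $z^{n+1}=z^n+\Delta t\sum_i b_i k_i^n$, eliminate the $(\Delta t)^2$ cross-terms via the symplecticity condition $b_ib_j-b_ia_{ij}-b_ja_{ji}=0$, use the skew-adjointness of $\mathcal{S}$ to obtain $(k_i^n,\mathcal{L}z_i^n)=-\lambda^n(k_i^n,N'(\tilde z_i^n))$, and close the balance with the fourth equation of \eqref{eq-2-18}. The only cosmetic difference is that the paper derives the key stage identity in one step by pairing $k_i^n=\mathcal{S}\bigl(\mathcal{L}z_i^n+\lambda^n N'(\tilde z_i^n)\bigr)$ with $\mathcal{L}z_i^n+\lambda^n N'(\tilde z_i^n)$, whereas you split it into two parallel computations.
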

\begin{proof}
The third equation in \eqref{eq-2-18} gives
\begin{equation}\label{eq-2-20}
	\mathcal{L}z^{n+1}=\mathcal{L}z^n+\Delta t\sum_{i=1}^sb_i\mathcal{L}k_i^n.
\end{equation}
Taking the inner product of \eqref{eq-2-20} with $z^{n+1}$ and using the symmetry of $\mathcal{L}$, we obtain
\begin{equation}\label{eq-2-21}
	\begin{aligned}
		\big(z^{n+1},\mathcal{L}z^{n+1}\big)&=\big(z^n,\mathcal{L}z^n\big)+2\Delta t\sum_{i=1}^sb_i\big(k_i^n,\mathcal{L}z^n\big)\\
		&\quad~+(\Delta t)^2\sum_{i,j=1}^sb_ib_j\big(k_i^n,\mathcal{L}k_j^n\big).
	\end{aligned}
\end{equation}
From the first equation of \eqref{eq-2-18}, it follows that
\begin{equation*}
	\mathcal{L}z^n=\mathcal{L}z_i^n-\Delta t\sum_{j=1}^sa_{ij}\mathcal{L}k_j^n.	
\end{equation*}	
Substituting this into the second term of \eqref{eq-2-21} gives
\begin{equation}\label{eq-2-22}
	\begin{aligned}
		\big(z^{n+1},\mathcal{L}z^{n+1}\big)&=\big(z^n,\mathcal{L}z^n\big)+2\Delta t\sum_{i=1}^sb_i\big(k_i^n,\mathcal{L}z_i^n\big),\\
		&~\quad+(\Delta t)^2\sum_{i,j=1}^s\big(b_ib_j-b_ia_{ij}-b_ja_{ji}\big)\big(k_i^n,\mathcal{L}k_j^n\big).
	\end{aligned}
\end{equation}
The RK symplecticity condition \cite{hairer-06-GNI-ODE} 
\begin{equation}\label{eq-2-23}
	b_ib_j-b_ia_{ij}-b_ja_{ji}=0,\quad i,j=1,\cdots,s,
\end{equation}
implies the vanishing of the second row in \eqref{eq-2-22}. Taking the inner product of the second equation in \eqref{eq-2-18} with $\mathcal{L}z_i^n+\lambda^nN'(\tilde{z}_i^n)$, and noting that $\mathcal{S}$ is skew-adjoint, we derive
\begin{equation*}
\big(k_i^n,\mathcal{L}z_i^n\big)+\lambda^n\big(k_i^n,N'(\tilde{z}_i^n)\big)=0.
\end{equation*}
Inserting it into \eqref{eq-2-22} yields
\begin{equation*}
	\frac{1}{2}\big(z^{n+1},\mathcal{L}z^{n+1}\big)=\frac{1}{2}\big(z^n,\mathcal{L}z^n\big)-\lambda^n\Delta t\sum_{i=1}^sb_i\big(k_i^n,N'(\tilde{z}_i^n)\big).
\end{equation*} 
Together with the fourth equation in \eqref{eq-2-18}, this result leads to \eqref{eq-2-19}, thus completing the proof.
\end{proof}

In the following, we discuss how to implement \eqref{eq-2-17}-\eqref{eq-2-18} efficiently. For each $m=0,1,\cdots,\Lambda-1$, we first rewrite \eqref{eq-2-17} as
\begin{equation}\label{eq-2-24}
	\textbf{z}_{(m+1)}^n=\mathcal{C}^{-1}\big(\textbf{z}^n+\Delta tA\textbf{f}_{2,(m)}^n\big),
\end{equation}
where $\mathcal{C}=I_s-\Delta t\mathcal{S}\mathcal{L}A$, $I_s$ denotes the $s\times s$ identity matrix, $A=(a_{ij})_{i,j=1}^s$ is the RK matrix, and the $s$-dimensional column vectors
\begin{equation*}
	\begin{aligned}
		\textbf{z}^n&=\big(z^n,z^n,\cdots,z^n\big)^T,\\
		\textbf{z}_{(m+1)}^n&=\big(z_{1,(m+1)}^n,\cdots,z_{s,(m+1)}^n\big)^T,\\
		\textbf{f}_{2,(m)}^n&=\big(f_2(z_{1,(m)}^n),\cdots,f_2(z_{s,(m)}^n)\big)^T.
	\end{aligned}
\end{equation*}
After completing the prediction step \eqref{eq-2-24}, we get
\begin{equation*}
\tilde{\textbf{z}}^n=\textbf{z}_{(\Lambda)}^n,
\end{equation*}
where $\tilde{\textbf{z}}^n=\big(\tilde{z}_1^n,\cdots,\tilde{z}_s^n\big)^T$. Inserting it into \eqref{eq-2-18}, we derive
\begin{equation*}
	\textbf{k}^n=L_1^n+\lambda^nR_1^n,
\end{equation*}
where 
\begin{equation}\label{eq-2-25}
	L_1^n=\mathcal{C}^{-1}f_1(\textbf{z}^n),\quad R_1^n=\mathcal{C}^{-1}f_2(\tilde{\textbf{z}}^n),\quad \textbf{k}^n=\big(k_1^n,\cdots,k_s^n\big)^T.
\end{equation}
Then, the final numerical solution is
\begin{equation}\label{eq-2-26}
	z^{n+1}=L_2^n+\lambda^nR_2^n,
\end{equation}
where 
\begin{equation}\label{eq-2-27}
	L_2^n=z^n+\Delta t\textbf{b}L_1^n,\quad R_2^n=\Delta t\textbf{b}R_1^n,\quad \textbf{b}=\big(b_1,b_2,\cdots,b_s\big).
\end{equation}
We can employ the Newton iteration with an initial value $1$ to solve the Lagrange multiplier $\lambda^n$ from the algebraic equation
\begin{equation}\label{eq-2-28}
	\big(N(L_2^n+\lambda^nR_2^n)-N(z^n),1\big)-\lambda^n\Delta t\textbf{b}\big(N'(\tilde{\textbf{z}}^n),L_1^n+\lambda^nR_1^n\big)=0.
\end{equation}
In summary, the scheme \eqref{eq-2-17}-\eqref{eq-2-18} can be implemented in the following steps:
\begin{algorithm}[H]
\begin{itemize}
	\item[{\textbf{$\star$}}]\textbf{step 1:} obtain $\tilde{\textbf{z}}^n$ from \eqref{eq-2-24};
	\item[{\textbf{$\star$}}]\textbf{step 2:} compute $L_1^n$, $R_1^n$ and $L_2^n$, $R_2^n$ from \eqref{eq-2-25} and \eqref{eq-2-27};
	\item[{\textbf{$\star$}}]\textbf{step 3:} search $\lambda^n$ by solving \eqref{eq-2-28};
	\item[{\textbf{$\star$}}]\textbf{step 4:} get $z^{n+1}$ by \eqref{eq-2-26} and then proceed to the next step.
\end{itemize}
\end{algorithm}

In the first two steps, this scheme requires solving only linear systems with constant coefficients. When combined with pseudo-spectral spatial discretization, these solution process can be accelerated using fast Fourier transform. Subsequent numerical tests show that the computational cost of the Lagrange multiplier is negligible, so the calculation of the proposed scheme is efficient.

\subsection{Convergence of the Lagrange multiplier scheme}
This subsection is devoted to a convergence analysis of the Lagrange multiplier scheme. Following the work of \cite{calvo-06-RK-SISC,calvo-10-Lyapunov-BIT}, we gives the following theorem.

\begin{theorem}\label{Th-2-6}
If $\big(N'(z^n),\frac{\delta\mathcal{H}}{\delta z}(z^n)\big)\neq0$, there exists a constant $\Delta t^*$ such that, for all $\Delta t\in(0,\Delta t^*]$, the algebraic equation \eqref{eq-2-28} locally defines a unique solution $\lambda^n=\lambda^n(\Delta t)$ in a neighborhood of $1$. Furthermore, if the underlying RK method is of order $p$, then the scheme \eqref{eq-2-17}-\eqref{eq-2-18} is of order \mbox{min}\{p,$\Lambda$\}, i.e.,
\begin{equation}\label{eq-2-29}
	z^{n+1}-z(t_{n+1})=\mathcal{O}\big((\Delta t)^{\min\{p,\Lambda\}+1}\big).
\end{equation}
\end{theorem}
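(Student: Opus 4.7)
The plan is to prove the theorem in three steps: (i) apply the implicit function theorem to \eqref{eq-2-28} to obtain $\lambda^n(\Delta t)$; (ii) estimate $|\lambda^n - 1|$; (iii) derive \eqref{eq-2-29} by comparison with a ``frozen-multiplier" variant of the scheme whose error is already controlled by Theorem \ref{Th-2-4}.

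Write $G(\lambda, \Delta t)$ for the left-hand side of \eqref{eq-2-28}. At $\Delta t = 0$ one has $L_2^n = z^n$, $R_2^n = 0$, $\tilde z_i^n = z^n$, and each component of $L_1^n$ and $R_1^n$ reduces to $f_1(z^n)$ and $f_2(z^n)$ respectively, so $G(1, 0) = 0$. Unfortunately every occurrence of $\lambda$ in $G$ is accompanied by a factor of $\Delta t$, so $\partial_\lambda G(1, 0) = 0$ as well and the implicit function theorem does not apply directly. The standard remedy is to rescale: set $\tilde G(\lambda, \Delta t) := G(\lambda, \Delta t)/\Delta t$, which extends smoothly to $\Delta t = 0$. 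A short Taylor computation together with $\sum_i b_i = 1$ gives
\begin{equation*}
\tilde G(\lambda, 0) \;=\; (1-\lambda)\bigl(N'(z^n),\, f_1(z^n) + \lambda f_2(z^n)\bigr),
\end{equation*}
so $\partial_\lambda \tilde G(1, 0) = -\bigl(N'(z^n),\, f_1(z^n) + f_2(z^n)\bigr)$, which is nonzero by the stated non-degeneracy hypothesis (recall $f_1 + f_2 = \mathcal{S}\tfrac{\delta\mathcal{H}}{\delta z}$). The implicit function theorem then produces $\Delta t^\ast > 0$ and a smooth branch $\lambda^n = \lambda^n(\Delta t)$, unique in a neighborhood of $1$, solving \eqref{eq-2-28} for all $\Delta t \in (0, \Delta t^\ast]$.

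To estimate $|\lambda^n - 1|$, I would Taylor expand $\tilde G(\cdot, \Delta t)$ around $\lambda = 1$. By continuity, $\partial_\lambda \tilde G(1, \Delta t)$ stays bounded away from zero for small $\Delta t$, so it suffices to bound $\tilde G(1, \Delta t)$. With $\lambda^n \equiv 1$, the quantity $G(1, \Delta t)$ is precisely the discrete energy-balance defect of the frozen-multiplier scheme obtained from \eqref{eq-2-17}-\eqref{eq-2-18}, computed between the nonlinear energy increment $\bigl(N(L_2^n + R_2^n)-N(z^n),1\bigr)$ and the Gauss quadrature $\Delta t \sum_i b_i \bigl(N'(\tilde z_i^n),\, k_i^n|_{\lambda = 1}\bigr)$. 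Expanding both sides to high order and invoking the order-$p$ consistency of the underlying Gauss method together with the order-$\Lambda$ prediction bound from Lemma \ref{Le-2-3}, one obtains $G(1, \Delta t) = O\bigl((\Delta t)^{\min\{p,\Lambda\}+1}\bigr)$, and hence $\lambda^n - 1 = O\bigl((\Delta t)^{\min\{p,\Lambda\}}\bigr)$.

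Finally, let $z^{n+1}_\star$ denote the output of \eqref{eq-2-18} with $\lambda^n$ replaced by $1$ (keeping the same predicted stages $\tilde z_i^n$). The same reasoning as in Lemma \ref{Le-2-3} and Theorem \ref{Th-2-4} applied to $z^{n+1}_\star$, which amounts to one additional correction iterate of \eqref{eq-2-10}-\eqref{eq-2-11}, yields $z^{n+1}_\star - z(t_{n+1}) = O\bigl((\Delta t)^{\min\{p,\Lambda\}+1}\bigr)$. From \eqref{eq-2-26} one has $z^{n+1} - z^{n+1}_\star = (\lambda^n - 1)\, R_2^n$, which is $O\bigl((\Delta t)^{\min\{p,\Lambda\}+1}\bigr)$ since $R_2^n = \mathcal{O}(\Delta t)$. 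The triangle inequality then yields \eqref{eq-2-29}. The main obstacle is the middle step: the sharp bound $G(1, \Delta t) = O\bigl((\Delta t)^{\min\{p,\Lambda\}+1}\bigr)$, because the frozen-multiplier scheme does not conserve $\mathcal{H}$ exactly, so the estimate relies on a careful cancellation between the Gauss quadrature residual and the perturbation introduced by the extrapolated stages $\tilde z_i^n$ in the discrete energy identity.
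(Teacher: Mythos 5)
Your proposal is correct and follows essentially the same route as the paper: the paper also compares the scheme with the frozen-multiplier ($\lambda^n=1$) prediction--correction iterate, bounds the same energy-balance defect by splitting it through $F(z(t_{n+1}))$ and invoking Theorem \ref{Th-2-4}, deduces $\lambda^n-1=\mathcal{O}\big((\Delta t)^{\min\{p,\Lambda\}}\big)$, and concludes via $z^{n+1}-z^{n+1}_\star=(\lambda^n-1)R_2^n$ with $R_2^n=\mathcal{O}(\Delta t)$. The only (cosmetic) difference is that you establish local existence and uniqueness of $\lambda^n$ by the implicit function theorem applied to the rescaled equation $G/\Delta t$, whereas the paper uses a fixed-point/contraction argument with L'H\^{o}pital's rule; both hinge on the same nondegeneracy quantity $\big(N'(z^n),f_1(z^n)\big)\neq 0$.
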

\begin{proof}
Note that $\tilde{\textbf{z}}^n$, $L_1^n$, $R_1^n$, $L_2^n$ and $R_2^n$ in \eqref{eq-2-28} can be regarded as functions of $\Delta t$. It follows that
\begin{equation}\label{eq-2-30}
	\begin{aligned}
	\lim_{\Delta t\rightarrow 0}\tilde{\textbf{z}}^n&=\textbf{z}^n,\quad\lim_{\Delta t\rightarrow 0}L_1^n=f_1(\textbf{z}^n),\quad \lim_{\Delta t\rightarrow 0}R_1^n=f_2(\textbf{z}^n),\\
    \lim_{\Delta t\rightarrow 0}L_2^n&=z^n,\quad \lim_{\Delta t\rightarrow 0}R_2^n=0.
	\end{aligned}
\end{equation}
From \eqref{eq-2-28}, we introduce
\begin{equation}\label{eq-2-31}
	\lambda^n=\frac{\big(N(L_2^n+\lambda^nR_2^n)-N(z^n),1\big)}{\Delta t\textbf{b}\big(N'(\tilde{\textbf{z}}^n),L_1^n+\lambda^nR_1^n\big)}:=\varphi(\lambda^n,\Delta t).
\end{equation}
Using the L'H\^{o}pital's rule, along with the antisymmetry of $\mathcal{S}$ and \eqref{eq-2-30}, we derive
\begin{equation}\label{eq-2-32}
	\lim_{\Delta t\rightarrow 0}\lambda^n=\lim_{\Delta t\rightarrow 0}\varphi(\lambda^n,\Delta t)=\frac{\big(N'(z^n),f_1(z^n)\big)}{\big(N'(z^n),f_1(z^n)\big)}=1,
\end{equation}	
where $\big(N'(z^n),f_1(z^n)\big)=\big(N'(z^n),\frac{\delta\mathcal{H}}{\delta z}(z^n)\big)\neq0$. Analogous calculations show that
\begin{equation*}
	\lim_{\Delta t\rightarrow 0}\frac{\partial\varphi(\lambda^n,\Delta t)}{\partial\lambda^n}=\frac{0}{\big(N'(z^n),f_1(z^n)\big)}=0.
\end{equation*}	
This equation confirms a contraction property. Therefore, by the fixed-point theorem, for a sufficiently small $\Delta t^*$, \eqref{eq-2-31} possesses a unique solution $\lambda^n=\lambda^n(\Delta t)$ near $1$ for all $\Delta t\in(0,\Delta t^*]$. Subsequently, we can compute
\begin{equation}\label{eq-2-33}
	\lambda^n-1=\frac{\big(N(L_2^n+\lambda^nR_2^n)-N(z^n),1\big)-\Delta t\textbf{b}\big(N'(\tilde{\textbf{z}}^n),L_1^n+\lambda^nR_1^n\big)}{\Delta t\textbf{b}\big(N'(\tilde{\textbf{z}}^n),L_1^n+\lambda^nR_1^n\big)}:=\frac{\mu(\Delta t)}{\nu(\Delta t)}.
\end{equation}
Combining \eqref{eq-2-30}, \eqref{eq-2-32}, and the identity $\sum_{i=1}^s b_i = 1$, we obtain
\begin{equation*}
	\lim_{\Delta t\rightarrow 0}\textbf{b}\big(N'(\tilde{\textbf{z}}^n),L_1^n+\lambda^nR_1^n\big)=\big(N'(z^n),f_1(z^n)\big)\neq0.
\end{equation*}	
This result implies that
\begin{equation*}
	\nu(\Delta t)=\mathcal{O}(\Delta t).
\end{equation*}
Then, we define $z_{(\Lambda)}^{n+1}$ as the solution of
\begin{equation*}
	\begin{aligned}
		\hat{z}_i^n&=z^n+\Delta t\sum_{j=1}^sa_{ij}\hat{k}_j^n,\\
		\hat{k}_i^n&=f_1\big(\hat{z}_i^n\big)+f_2\big(\tilde{z}_i^n\big),\\
		z_{(\Lambda)}^{n+1}&=z^n+\Delta t\sum_{i=1}^sb_i\hat{k}_i^n,
	\end{aligned}
\end{equation*}
where $\tilde{z}_i^n=z_{i,(\Lambda)}^n$ is computed from \eqref{eq-2-17}. According to Theorem \ref{Th-2-4}, if the underlying RK method is of order $p$, it follows that
\begin{equation}\label{eq-2-34}
	\begin{aligned}
		z_{(\Lambda)}^{n+1}-z(t_{n+1})&=\mathcal{O}\big((\Delta t)^{\min\{p,\Lambda\}+1}\big),\\
		F\big(z(t_{n+1})\big)-\big(F(z^n)+\Delta t\textbf{b}\big(N'(\tilde{\textbf{z}}^n),\hat{\textbf{k}}^n\big)\big)&=\mathcal{O}\big((\Delta t)^{\min\{p,\Lambda\}+1}\big),
    \end{aligned}
\end{equation}	
where $\hat{\textbf{k}}^n=\big(\hat{k}_1^n,\cdots,\hat{k}_s^n\big)^T$, and we have $z_{(\Lambda)}^{n+1}=L_2^n+R_2^n$ and $\hat{\textbf{k}}^n=L_1^n+R_1^n$. Using the Taylor expansion and the first equation in \eqref{eq-2-34}, we get
\begin{equation*}
	F\big(z_{(\Lambda)}^{n+1}\big)-F\big(z(t_{n+1})\big)=\mathcal{O}\big((\Delta t)^{\min\{p,\Lambda\}+1}\big).
\end{equation*}
With \eqref{eq-2-32}, the above equations imply that as $\Delta t\rightarrow 0$,
\begin{equation*}
	\begin{aligned}
		\mu(\Delta t)&=\big(N(L_2^n+\lambda^nR_2^n)-N(z^n),1\big)-\Delta t\textbf{b}\big(N'(\tilde{\textbf{z}}^n),L_1^n+\lambda^nR_1^n\big),\\
		&=F\big(z_{(\Lambda)}^{n+1}\big)-F\big(z(t_{n+1})\big)+F\big(z(t_{n+1})\big)-\big(F(z^n)+\Delta t\textbf{b}\big(N'(\tilde{\textbf{z}}^n),\hat{\textbf{k}}^n\big)\big)\\
		&=\mathcal{O}\big((\Delta t)^{\min\{p,\Lambda\}+1}\big).
	\end{aligned}
\end{equation*}	
From \eqref{eq-2-33}, we then have
\begin{equation}\label{eq-2-35}
	\lambda^n-1=\frac{\mu(\Delta t)}{\nu(\Delta t)}=\mathcal{O}\big((\Delta t)^{\min\{p,\Lambda\}}\big).
\end{equation}
Thus, the accuracy of the scheme \eqref{eq-2-17}-\eqref{eq-2-18} can be obtained from
\begin{equation*}
	\begin{aligned}
		z^{n+1}-z(t_{n+1})&=L_2^n+\lambda^nR_2^n-z(t_{n+1})\\
		&=L_2^n+R_2^n-z(t_{n+1})+(\lambda^n-1)R_2^n,\\
		&=z_{(\Lambda)}^{n+1}-z(t_{n+1})+(\lambda^n-1)\Delta t\textbf{b}R_1^n.
	\end{aligned}
\end{equation*}	
In the limit as $\Delta t\rightarrow 0$, the equations \eqref{eq-2-30}, \eqref{eq-2-34} and \eqref{eq-2-35} collectively yield \eqref{eq-2-29}, which completes the proof.
\end{proof}

\begin{remark}
Theorem \ref{Th-2-6} requires that $\frac{\delta\mathcal{H}}{\delta z}$ not be orthogonal to $N'(z)$, this condition that is generically satisfied. Moreover, viewing the scheme \eqref{eq-2-17}-\eqref{eq-2-18} as a high-order generalization of the scheme LM-CN \eqref{eq-2-3}, this theorem also implies the second-order convergence of the latter.
\end{remark}

Finally, we employ the fourth- and sixth-order Gauss methods for numerical tests due to their excellent stability and high accuracy. The corresponding Butcher tableaux \cite{hairer-06-GNI-ODE} are displayed as follows:
\begin{equation*}
	\begin{array}{c|cc}
		\frac{1}{2}-\frac{\sqrt{3}}{6} & \frac{1}{4} & \frac{1}{4}-\frac{\sqrt{3}}{6}\\
		\frac{1}{2}+\frac{\sqrt{3}}{6} & \frac{1}{4}+\frac{\sqrt{3}}{6} & \frac{1}{4}\\
		\hline
		& \frac{1}{2} & \frac{1}{2}
	\end{array}~\mbox{and}~
	\begin{array}{c|cccc}
		\frac{1}{2}-\frac{\sqrt{15}}{10} & \frac{5}{36} & \frac{2}{9}-\frac{\sqrt{15}}{15} & \frac{5}{36}-\frac{\sqrt{15}}{30}\\
		\frac{1}{2} & \frac{5}{36}+\frac{\sqrt{15}}{24} & \frac{2}{9}  &\frac{5}{36}-\frac{\sqrt{15}}{24} \\
		\frac{1}{2}+\frac{\sqrt{15}}{10} & \frac{5}{36}+\frac{\sqrt{15}}{30} & \frac{2}{9}+\frac{\sqrt{15}}{15} & \frac{5}{36}\\
		\hline
		& \frac{5}{18} & \frac{4}{9} & \frac{5}{18}
	\end{array}.
\end{equation*}
We refer to the high-order Lagrange multiplier scheme \eqref{eq-2-17}-\eqref{eq-2-18} as LM-GAUSS. As the Gauss methods are symplectic, they satisfy the RK symplecticity condition \eqref{eq-2-23} in the proof of Theorem \ref{Th-2-5}. The diagonally implicit symplectic RK methods also serve as a promising alternative, capable of producing favorable numerical results. These methods will not be further explored in this paper. In summary, LM-GAUSS constitutes a novel class of linear energy-preserving schemes that can achieve arbitrary accuracy by increasing the RK stages.

\section{Experimental results}\label{sec:experiments}
This section presents numerical tests to evaluate the efficiency, accuracy, and structure-preserving properties of the proposed schemes. We compare all Lagrange multiplier schemes with the SAV scheme \eqref{eq-1-5} ($c_0=1$), the IEQ scheme \cite{cai-20-linear-MS-JCP,jiang-19-SG-IEQ-JSC} and HBVMs \cite{luigi-10-HBVM-JNAIAM}. For the prediction step \eqref{eq-2-17}, we set $\Lambda=p$, where $p$ is the order of the Gauss methods. The Lagrange multiplier is solved using the Newton iteration with a tolerance of $1e-12$. The spatial domain is discretized via the Fourier pseudo-spectral method \cite{shen-11-spectral}, which allows the application of fast Fourier transforms. All computations are carried out in Matlab R2016a with Intel Core i5-9500 CPU, 3.00 GHz and 8GB memory. 

\subsection{The Korteweg-de Vries equation}
We first test the KdV equation
\begin{equation*}
	u_t+\eta uu_x+\mu^2 u_{xxx}=0,\quad x\in\Omega\subset\mathbb{R}^1,~t\in (0,T],
\end{equation*}
where we set $\eta=1$ and periodic boundary conditions. The initial condition is computed from the exact solutions at $t=0$. The two different soliton solutions \cite{luigi-19-HBVM-KDV-JCAM} are shown as follows:
\begin{itemize}
\item[{(1)}]one-soliton wave solution: 
\begin{equation*}
	u(x,t)=3\gamma\Big(\mbox{sech}\big(\frac{\sqrt{\gamma}}{2\mu}(x-\gamma t)_{\Omega_1}\big)\Big)^2,~\mu^2=0.0013020833,~\gamma=\frac{1}{3},
\end{equation*}
where the notation
\begin{equation*}
	(\theta)_{\Omega_1}:=
	\left\{\aligned
	&x_R-\mbox{Rem}\left(x_R-\theta,x_R-x_L\right),\quad \mbox{if}\ \theta<x_L,\\
	&\theta,\quad\quad\quad\quad\quad\quad\quad\quad\quad\quad\quad\quad\quad\mbox{if}\ \theta\in\Omega_1,\\
	&x_L+\mbox{Rem}\left(\theta-x_L,x_R-x_L\right),\quad\mbox{if}\ \theta>x_R.
	\endaligned\right.
\end{equation*}
The mark Rem represents the remainder of the integer division of two parameters. We consider the spatial domain $\Omega_1=[x_L,x_R]=[-3,5]$, and the exact solution is temporally periodic with period $T=24$.
\item[{(2)}]two-soliton waves solution: 
\begin{equation*}
	u(x,t)=12\frac{k_1^2e^{\xi_1}+k_2^2e^{\xi_2}+2(k_2-k_1)^2e^{\xi_1+\xi_2}+\rho^2(k_2^2e^{\xi_1}+k_1^2e^{\xi_2})e^{\xi_1+\xi_2}}{(1+e^{\xi_1}+e^{\xi_2}+\rho^2e^{\xi_1+\xi_2})^2},
\end{equation*}
where we choose the parameter 
\begin{align*}
	\aligned
	&\mu=1,~k_1=0.4,~k_2=0.6,~\rho=(k_1-k_2)/(k_1+k_2),\\
	&\xi_1=k_1x-k_1^3t+4,~\xi_2=k_2x-k_2^3t+15,
	\endaligned
\end{align*}
and take $\Omega_2=[-40,40]$ as a tested domain.
\end{itemize}

\begin{table}[htbp]
\footnotesize
\caption{The KdV equation: temporal convergence tests for the one-soliton wave solution at time $T=1$ with $N=128$ and $\Delta t=0.002$.}\label{Tab-1}
\begin{center}
	\begin{tabular}{*{6}{l}}\hline
		Schemes & Time steps & $\Delta t$& $\Delta t/2$ & $\Delta t/2^2$ & $\Delta t/2^3$\\ \hline
		\multirow{2}*{IEQ-CN} & Error & 1.3135e-04 & 3.2978e-05 & 8.2360e-06 & 2.0585e-06\\
		& Order & - & 1.9939 & 2.0015 & 2.0004\\ \hline
		\multirow{2}*{SAV-CN} & Error & 1.3975e-04 & 3.5081e-05 & 8.7617e-06 & 2.1898e-06\\
		& Order & - & 1.9941 & 2.0014 & 2.0004\\ \hline
		\multirow{2}*{LM-CN} & Error & 1.4073e-04 & 3.5294e-05 & 8.8180e-06 & 2.2042e-06\\
		& Order & - & 1.9954 & 2.0009 & 2.0002\\ \hline
		\multirow{2}*{LM-GAUSS2} & Error & 9.3944e-08 & 5.8908e-09 & 3.6849e-10 & 2.3035e-11\\
		& Order & - & 3.9953 & 3.9988 & 3.9997\\ \hline
		\multirow{2}*{LM-GAUSS3} & Error & 5.2129e-11 & 8.1645e-13 & 1.3250e-14 &  2.1402e-16\\
		& Order & - & 5.9966 & 5.9453 & 5.9521\\ \hline
		\multirow{2}*{HBVM(3,3)} & Error & 5.2108e-11 & 8.1690e-13 & 1.3236e-14 & 2.1387e-16\\
		& Order & - & 5.9952 & 5.9476 & 5.9516\\ \hline
	\end{tabular}
\end{center}
\end{table}	
	
\begin{figure}[htbp]
\centering
\includegraphics[width=0.62\linewidth]{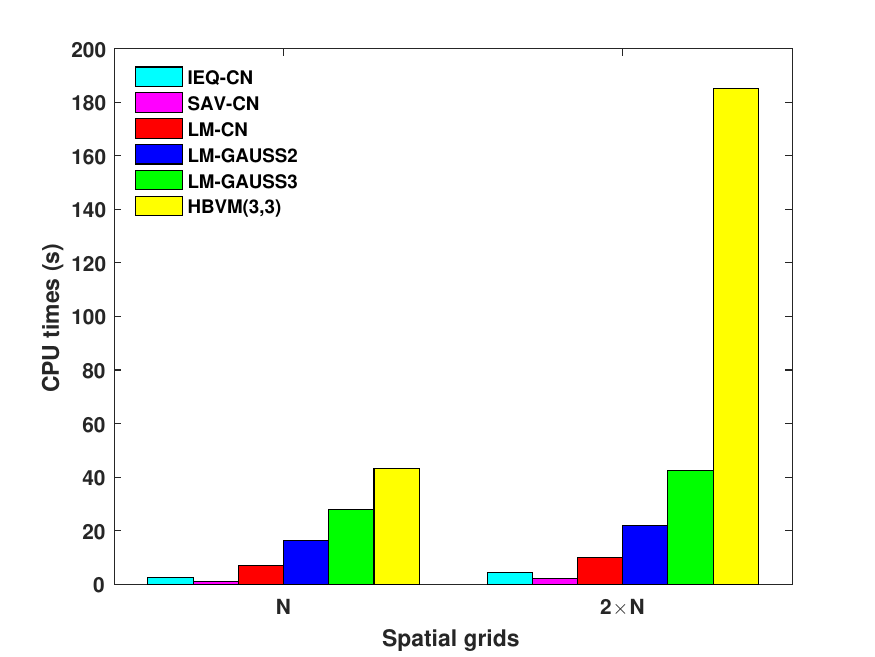}
\caption{The KdV equation: CPU times under different spatial grids for the one-soliton wave solution with $N=128$ and $\Delta t=0.002$ until $T=50$.}\label{Fig-1}
\end{figure}

\begin{figure}[htbp]
\centering
\includegraphics[width=0.62\linewidth]{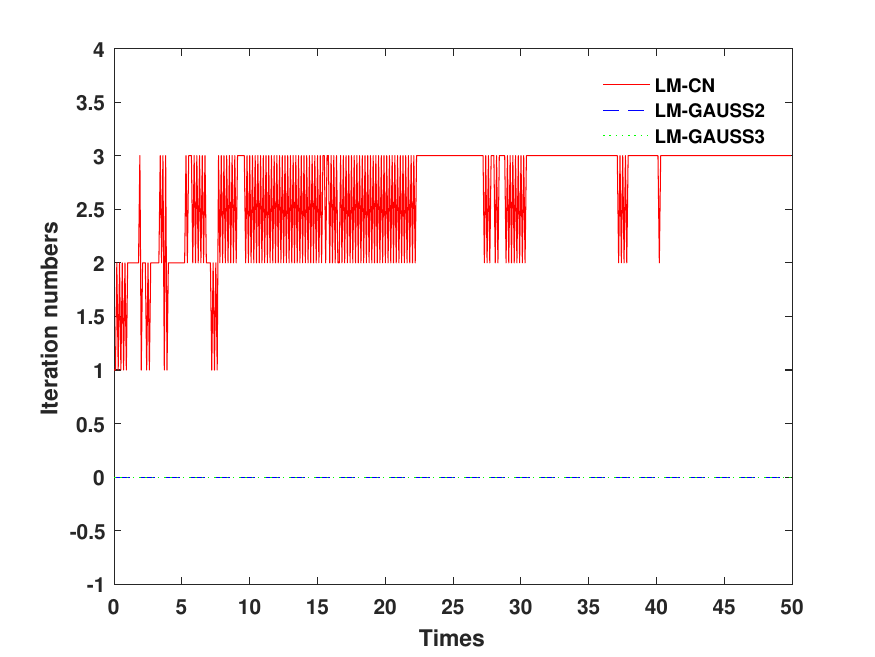}
\caption{The KdV equation: iteration numbers required to solve the Lagrange multiplier at each time step for the one-soliton wave solution with $N=128$ and $\Delta t=0.002$ until $T=50$.}\label{Fig-2}
\end{figure}

\begin{figure}[htbp]
\centering
\subfigure[numerical solutions of LM-CN]{
	\includegraphics[width=0.42\linewidth]{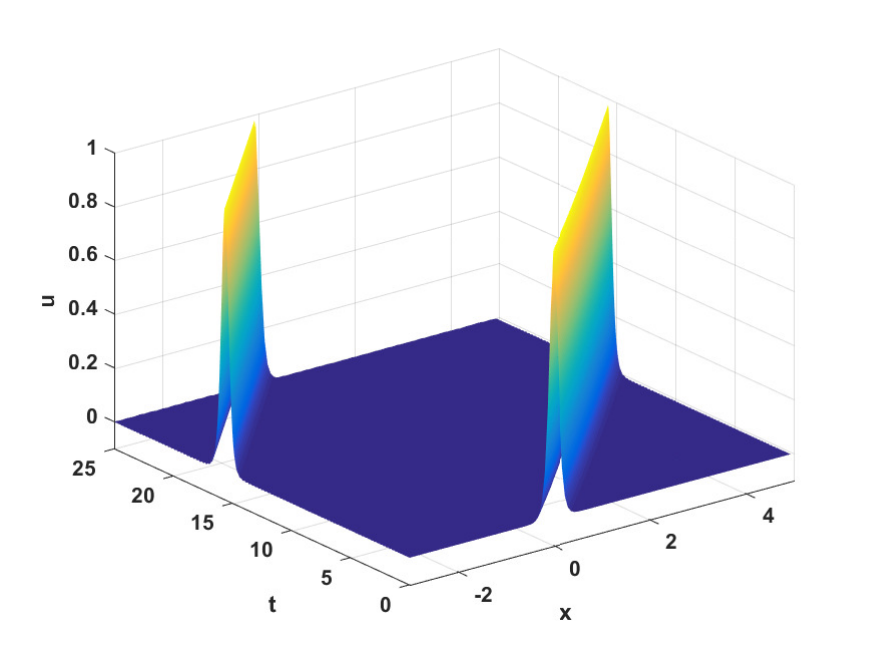}}\hspace{-2mm}
\subfigure[values of the Lagrange multiplier]{
	\includegraphics[width=0.42\linewidth]{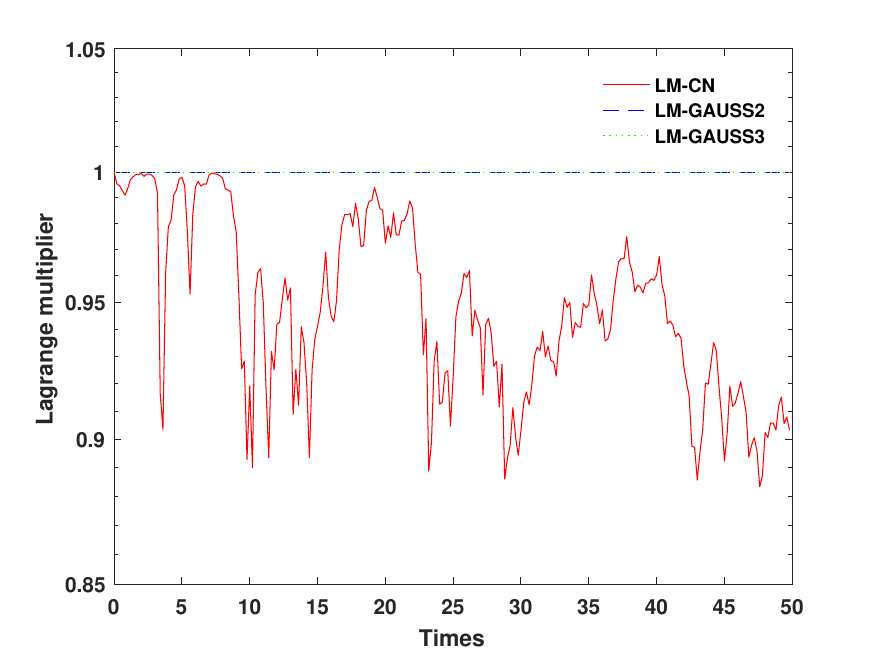}}\hspace{-2mm}
\subfigure[energy errors]{
	\includegraphics[width=0.42\linewidth]{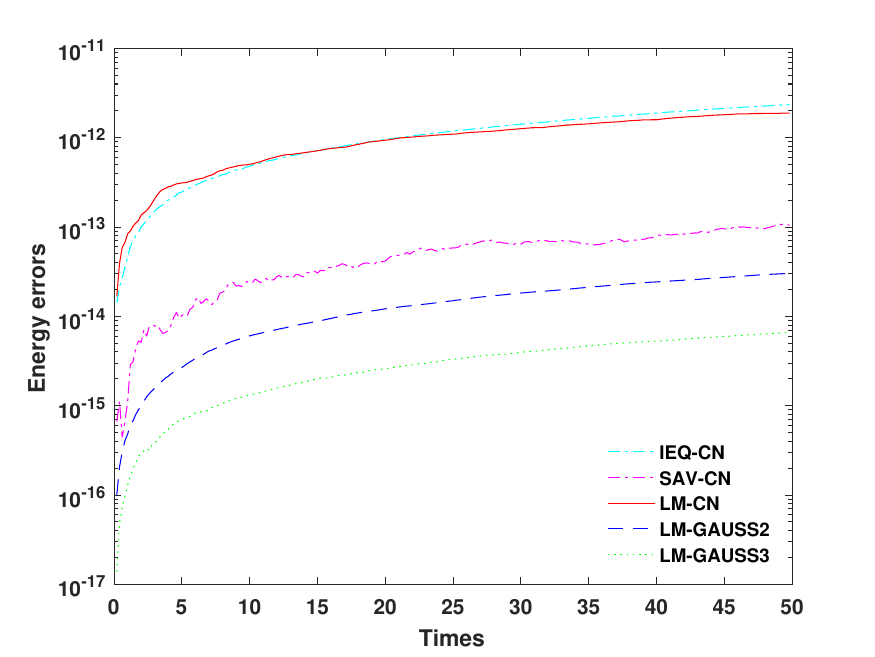}}\hspace{-2mm}
\subfigure[original energy errors]{
	\includegraphics[width=0.42\linewidth]{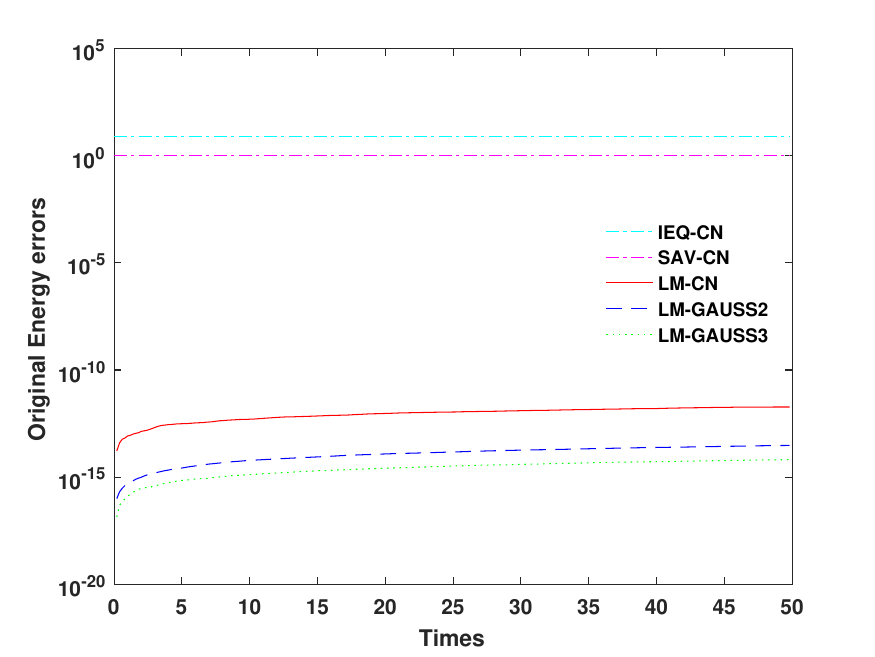}}\hspace{-2mm}
\caption{The KdV equation: numerical results for the one-soliton wave solution with $N=128$ and $\Delta t=0.002$ until $T=50$.}\label{Fig-3}
\end{figure}

\begin{figure}[htbp]
\centering
\subfigure[numerical solutions of LM-CN]{
	\includegraphics[width=0.42\linewidth]{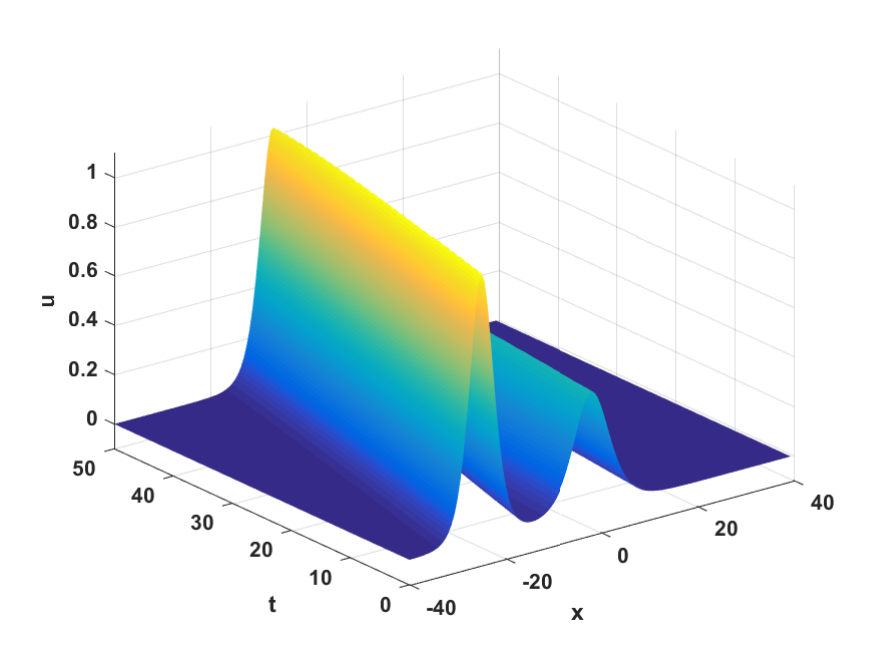}}\hspace{-2mm}
\subfigure[values of the Lagrange multiplier]{
	\includegraphics[width=0.42\linewidth]{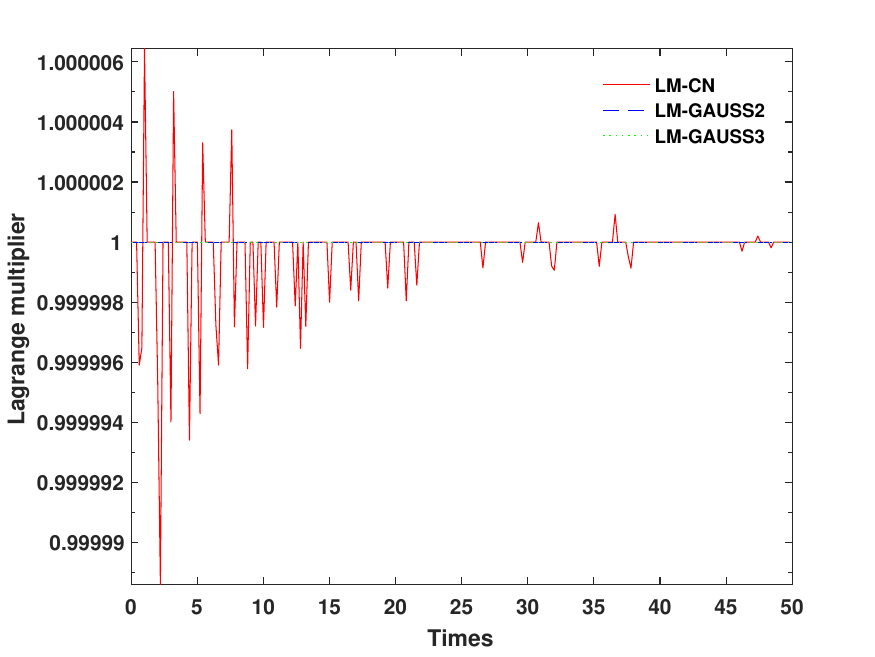}}\hspace{-2mm}
\subfigure[energy errors]{
	\includegraphics[width=0.42\linewidth]{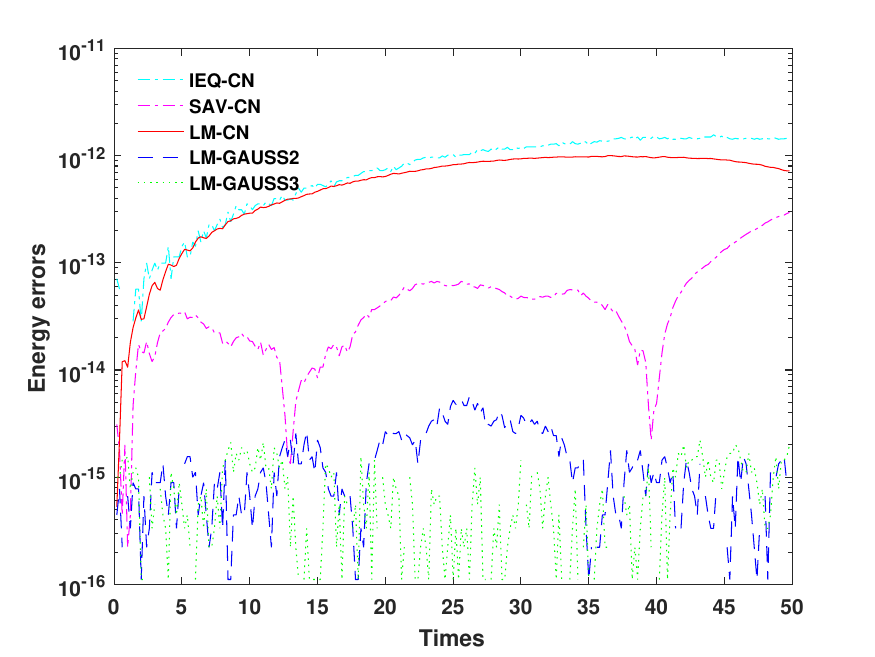}}\hspace{-2mm}
\subfigure[original energy errors]{
	\includegraphics[width=0.42\linewidth]{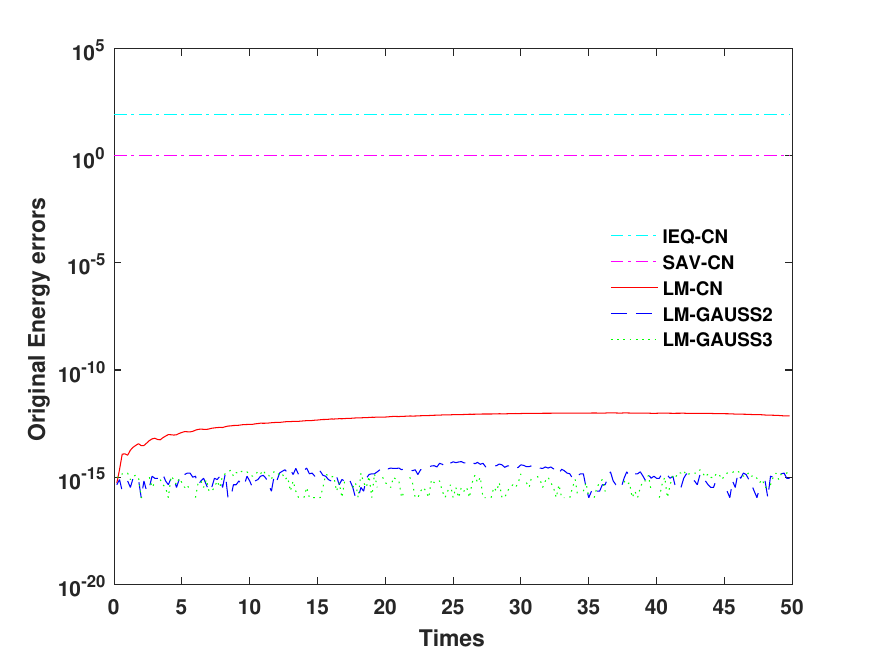}}\hspace{-2mm}
\caption{The KdV equation: numerical results for the two-soliton waves solution with $N=128$ and $\Delta t=0.002$ until $T=50$.}\label{Fig-4}
\end{figure}

Table \ref{Tab-1} confirms the temporal convergence rates of all schemes, which are in full agreement with the theoretical results in Theorem \ref{Th-2-6}. In particular, HBVM($3$,$3$) is the three-stage Gauss method. In Fig. \ref{Fig-1}, the comparison of CPU times reveals that the scheme LM-CN is remarkably efficient. Even though determining the Lagrange multiplier requires a cost of roughly two iterations, the runtime of LM-CN is comparable to that of the SAV and IEQ schemes. Owing to the high accuracy of the Gauss method, the number of iterations for the scheme LM-GAUSS is essentially zero in Fig. \ref{Fig-2}, demonstrating its high efficiency. This result also shows that the computational cost of the Lagrange multiplier is negligible. In Fig. \ref{Fig-3}, the one-soliton profile is well preserved and the numerical solution overlaps the exact waveform. All schemes accurately capture the two-soliton collision process depicted in Fig. \ref{Fig-4}. The Lagrange multiplier schemes preserve the original energy exactly over long-term simulations. In fact, the SAV and IEQ schemes conserve a modified energy that differs from the original energy by a constant. For instance, the SAV energy \eqref{eq-1-6} deviates by the constant $c_0=1$ from the original value.

\subsection{The nonlinear Schr\"odinger equation}
In this example, we consider the following NLS equation
\begin{equation*}
	\mbox{i}u_t+u_{xx}+\beta|u|^2u=0,\quad x\in\Omega\subset\mathbb{R}^1,\ t\in (0,T],
\end{equation*} 
where the mark $\mbox{i}$ is an imaginary unit and we set $\beta=1$. The computations are done on the space interval $\Omega=[-64,64]$. The two different soliton solutions \cite{gong-14-SP-MS-HPDEs} are tested as follows:
\begin{itemize}
	\item[{(1)}]one-soliton solution:
	\begin{equation*}
		u(x,0)=\frac{\sqrt{2}}{2}\exp\big(\mbox{i}\frac{x+20}{2}\big)\mbox{sech}\big(\frac{x+20}{2}\big),
	\end{equation*}
	\item[{(2)}]two-soliton solution:
	\begin{equation*}
		u(x,0)=\frac{\sqrt{2}}{2}\Big(\exp\big(\text{i}\frac{x+20}{2}\big)\text{sech}\big(\frac{x+20}{2}\big)+\exp\big(-\text{i}\frac{x-20}{2}\big)\text{sech}\big(\frac{x-20}{2}\big)\Big).
	\end{equation*}
\end{itemize}

\begin{figure}[htbp]
\centering
\subfigure[numerical solutions of LM-CN]{
	\includegraphics[width=0.42\linewidth]{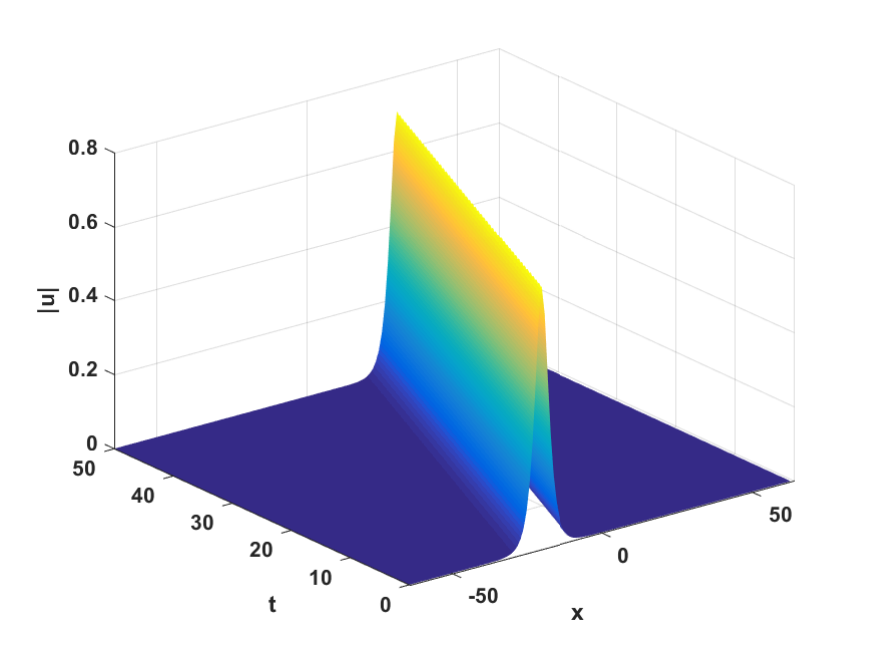}}\hspace{-2mm}
\subfigure[values of the Lagrange multiplier]{
	\includegraphics[width=0.42\linewidth]{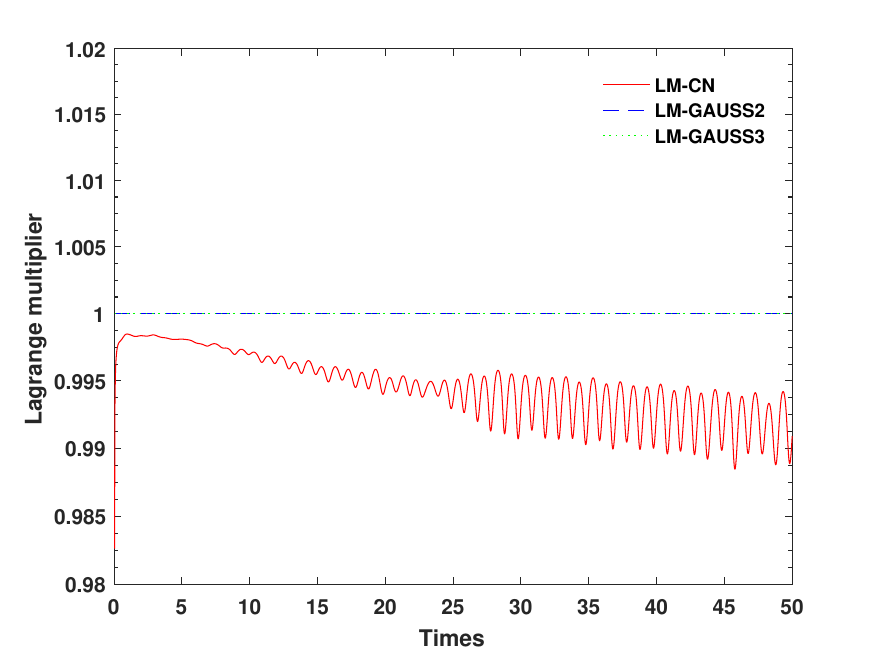}}\hspace{-2mm}
\subfigure[energy errors]{
	\includegraphics[width=0.42\linewidth]{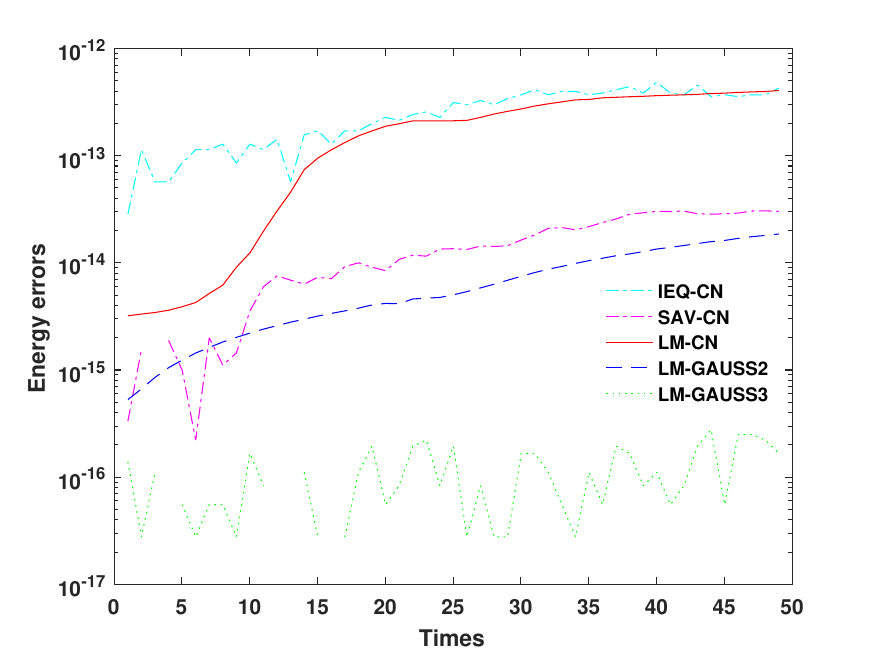}}\hspace{-2mm}
\subfigure[original energy errors]{
	\includegraphics[width=0.42\linewidth]{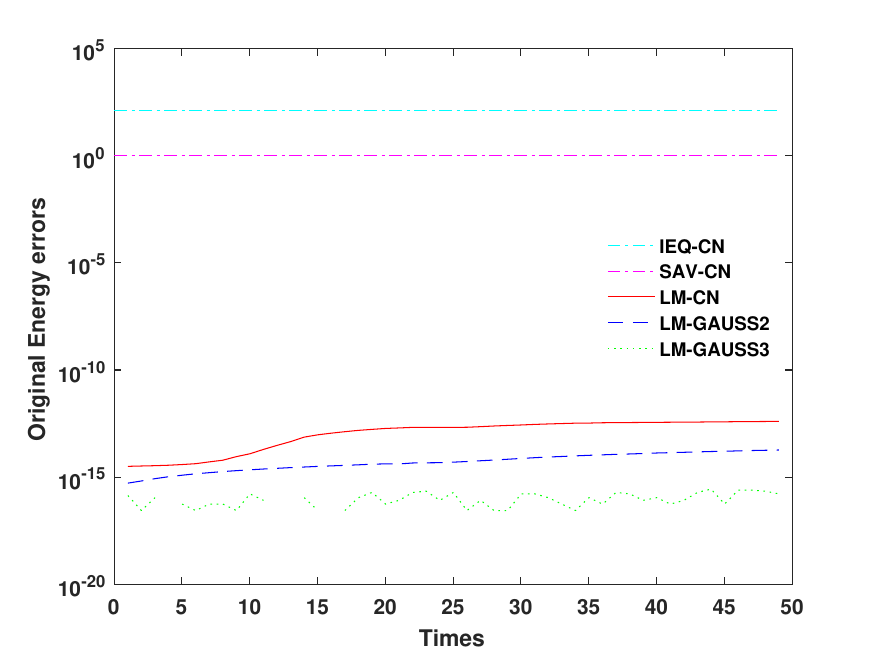}}\hspace{-2mm}
\caption{The NLS equation: numerical results for the one-soliton solution with $N=128$ and $\Delta t=0.02$ until $T=50$.}\label{Fig-5}
\end{figure}

\begin{figure}[htbp]
\centering
\subfigure[numerical solutions of LM-CN]{
	\includegraphics[width=0.42\linewidth]{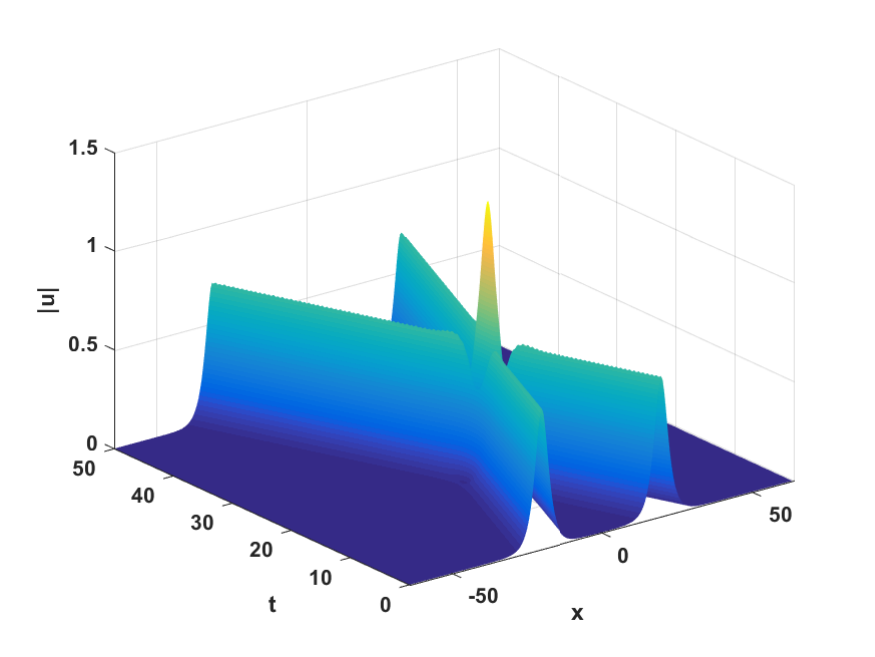}}\hspace{-2mm}
\subfigure[values of the Lagrange multiplier]{
	\includegraphics[width=0.42\linewidth]{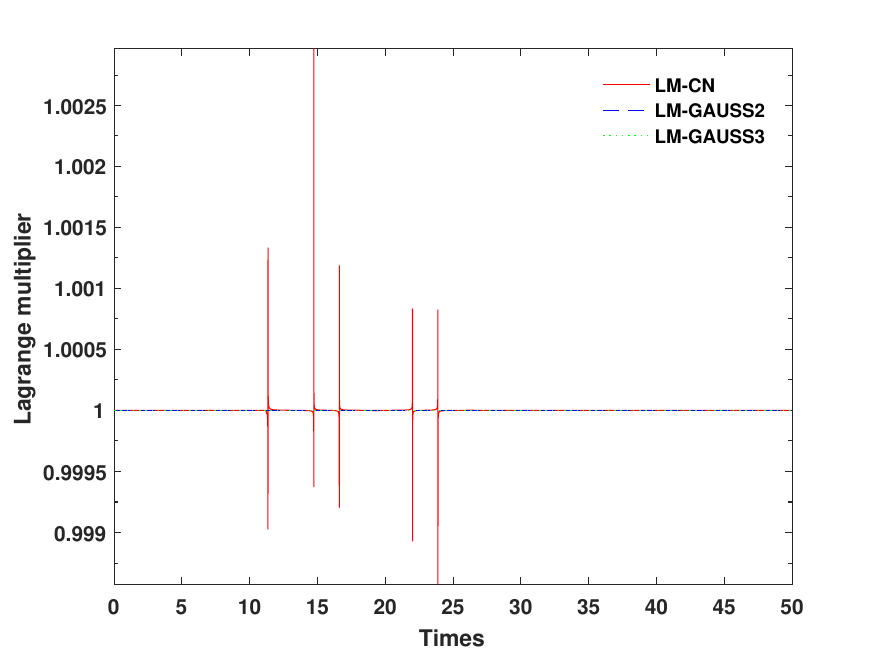}}\hspace{-2mm}
\subfigure[energy errors]{
	\includegraphics[width=0.42\linewidth]{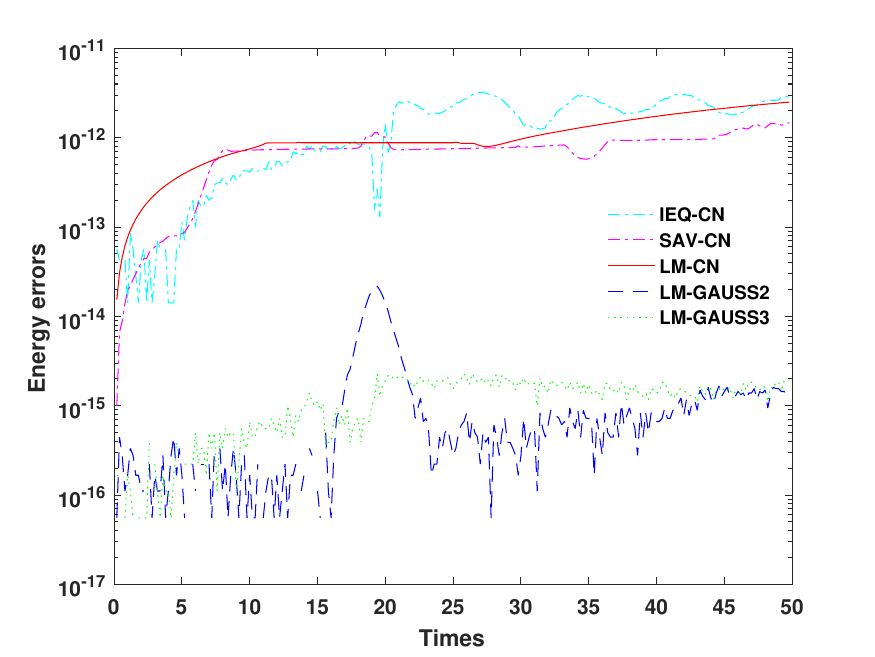}}\hspace{-2mm}
\subfigure[original energy errors]{
	\includegraphics[width=0.42\linewidth]{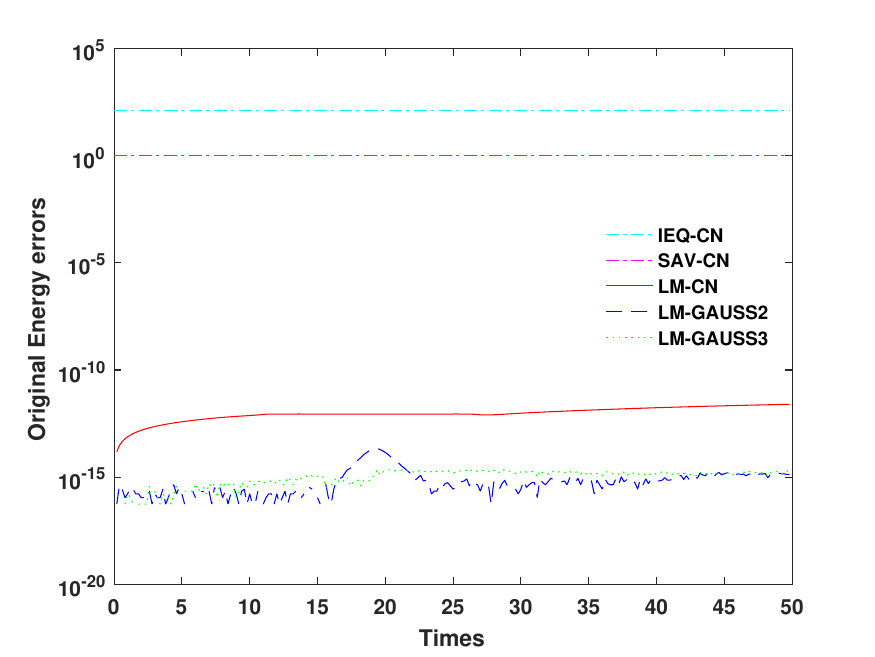}}\hspace{-2mm}
\caption{The NLS equation: numerical results for the two-soliton solution with $N=128$ and $\Delta t=0.02$ until $T=50$.}\label{Fig-6}
\end{figure}

As plotted in Fig. \ref{Fig-5}, the solitary wave propagates from left to right as expected over the time interval $[0,50]$ and accurately retains its shape. The interaction between two solitons during the same interval is shown in Fig. \ref{Fig-6}. The collision appears highly elastic, with both waves continuing to propagate in their original directions and same velocities. The Lagrange multiplier determined by the Newton iteration remains close to $1$, which is consistent with the analytical results in Theorem \ref{Th-2-6}. The proposed Lagrange multiplier schemes also exhibit remarkable long-time conservation of the original energy, in contrast to the auxiliary variable schemes, which fail to preserve this quantity.

\subsection{The sine-Gordon equation}
We consider the following two-dimensional SG equation
\begin{equation*}
	u_{tt}-u_{xx}-u_{yy}+\phi(x,y)\sin u=0,\quad (x,y)\in\Omega\subset\mathbb{R}^2,\ t\in (0,T],
\end{equation*} 
where we set $\phi(x,y)=1$ and periodic boundary conditions. The following two different initial conditions \cite{jiang-19-SG-IEQ-JSC} are carried out by selecting
\begin{itemize}
\item[{(1)}]circular ring soliton:
\begin{align*}
	\aligned
	&u(x,y,0)=4\tan^{-1}\big(\exp(3-\sqrt{x^2+y^2})\big),\\
	&u_t(x,y,0)=0,\quad \Omega_1=[-7,7]\times[-7,7],
	\endaligned
\end{align*}
\item[{(2)}]collision of two circular solitons:
\begin{align*}
	\aligned
	&u(x,y,0)=4\tan^{-1}\exp\big((4-\sqrt{(x+3)^2+(y+7)^2})/0.436\big),\\
	&u_t(x,y,0)=4.13~\mbox{sech}\big((4-\sqrt{(x+3)^2+(y+7)^2})/0.436\big),\\
	&\Omega_2=[-30,10]\times[-21,7].
	\endaligned
\end{align*}
\end{itemize} 

\begin{table}[htbp]
\footnotesize
\caption{The SG equation: temporal convergence tests for the circular ring soliton at time $T=1$ with $N=128$ and $\Delta t=0.02$.}\label{Tab-2}
\begin{center}
	\begin{tabular}{*{6}{l}}\hline
		Schemes & Time steps & $\Delta t$& $\Delta t/2$ & $\Delta t/2^2$ & $\Delta t/2^3$\\ \hline
		\multirow{2}*{IEQ-CN} & Error & 5.1630e-04 & 1.1983e-04 & 2.9663e-05 & 7.3733e-06\\
		& Order & - & 2.1072 & 2.0143 & 2.0083\\ \hline
		\multirow{2}*{SAV-CN} & Error & 5.1041e-04 & 1.2114e-04 & 2.9994e-05 & 7.4563e-06\\
		& Order & - & 2.0749 & 2.0140 & 2.0081\\ \hline
		\multirow{2}*{LM-CN} & Error & 5.4018e-04 & 1.2867e-04 & 3.1958e-05 & 7.9663e-06\\
		& Order & - & 2.0697 & 2.0095 & 2.0042\\ \hline
		\multirow{2}*{LM-GAUSS2} & Error & 3.8708e-06 & 2.4679e-07 & 1.5499e-08 & 9.6990e-10\\
		& Order & - & 3.9713 & 3.9930 & 3.9982\\ \hline
		\multirow{2}*{LM-GAUSS3} & Error & 1.3385e-08 & 2.1207e-10 & 3.3271e-12 & 5.5955e-14\\
		& Order & - & 5.9798 & 5.9941 & 5.8939\\ \hline
		\multirow{2}*{HBVM(3,3)} & Error & 1.3385e-08 & 2.1206e-10 & 3.3351e-12 & 5.1514e-14\\
		& Order & - & 5.9800 & 5.9906 & 6.0166\\ \hline
    \end{tabular}
\end{center}
\end{table}

\begin{figure}[htbp]
\centering
\includegraphics[width=0.62\linewidth]{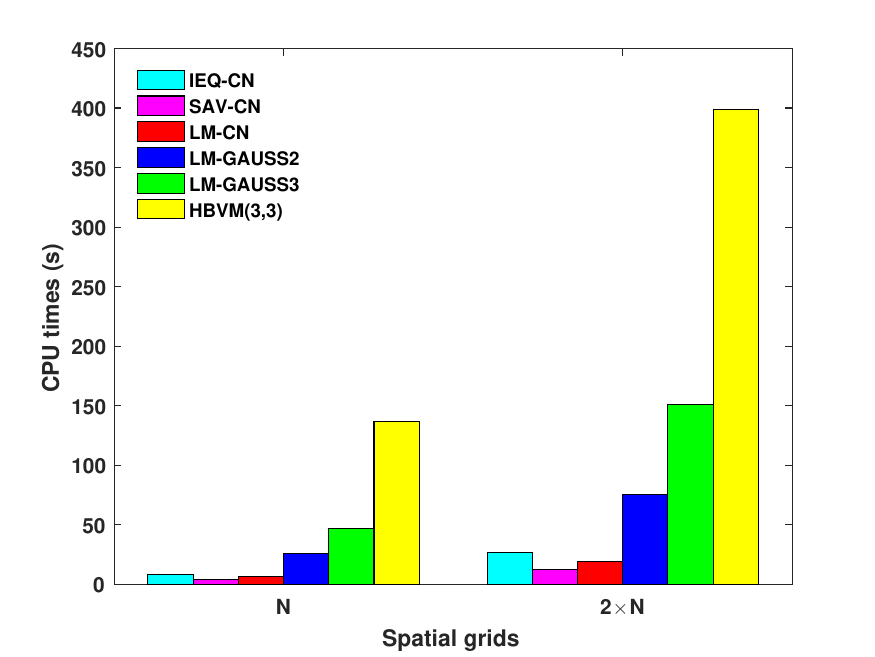}
\caption{The SG equation: CPU times under different spatial grids for the circular ring soliton with $N=128$ and $\Delta t=0.02$ until $T=50$.}\label{Fig-7}
\end{figure}

\begin{figure}[htbp]
\centering
\includegraphics[width=0.62\linewidth]{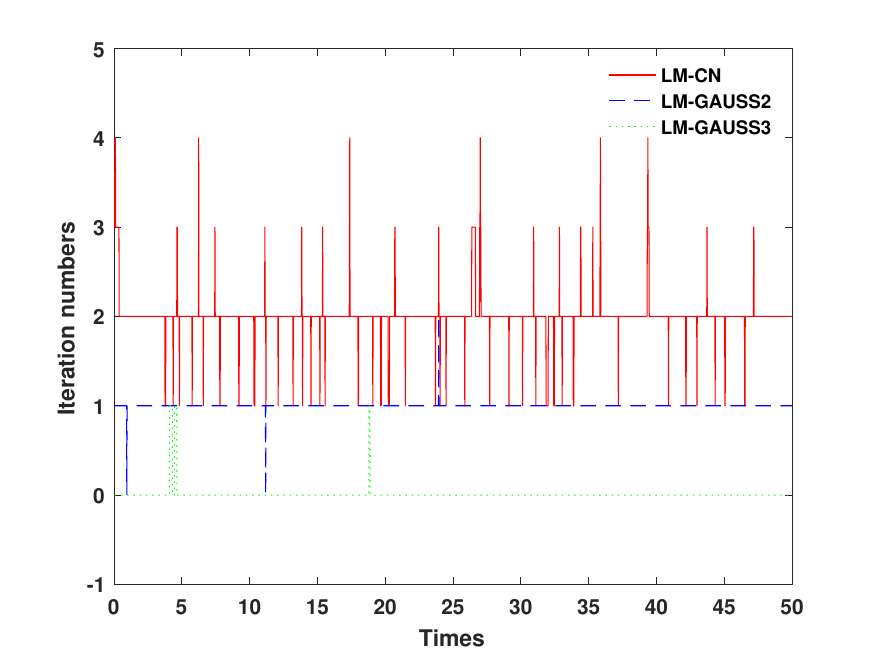}
\caption{The SG equation: iteration numbers required to solve the Lagrange multiplier at each time step for the circular ring soliton with $N=128$ and $\Delta t=0.02$ until $T=50$.}\label{Fig-8}
\end{figure}

\begin{figure}[htbp]
\centering
\includegraphics[width=0.32\linewidth]{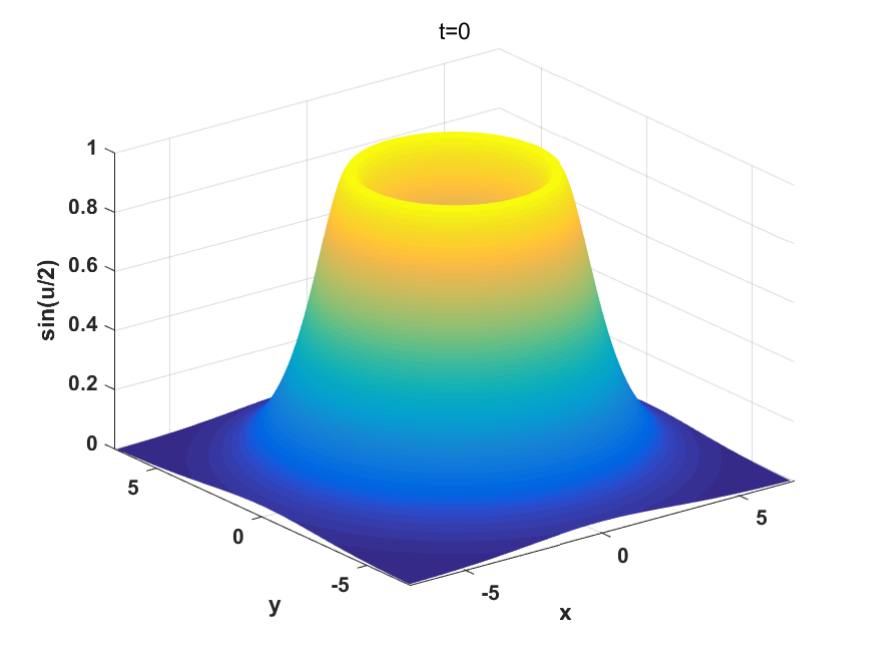}\hspace{-2mm}
\includegraphics[width=0.32\linewidth]{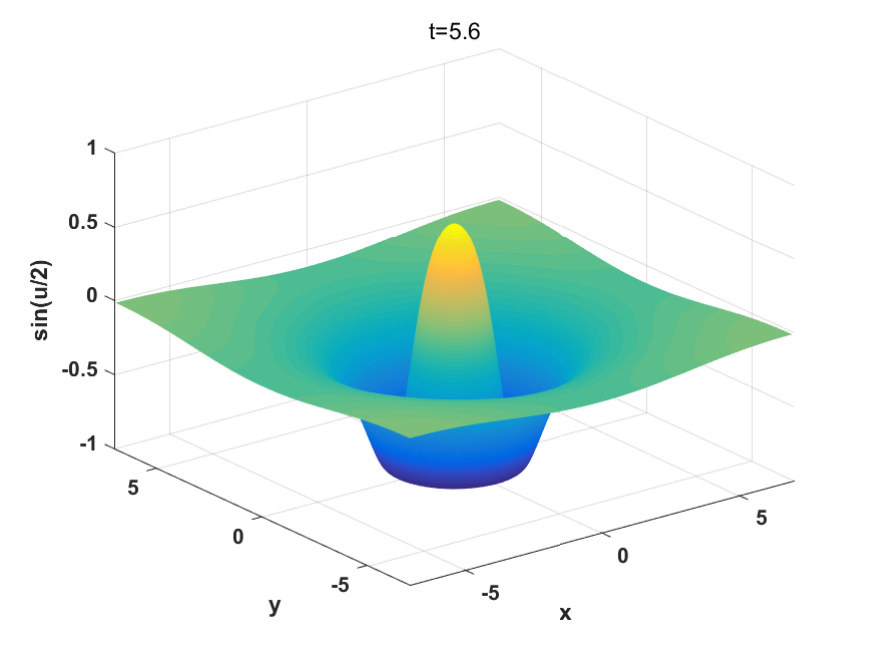}\hspace{-2mm}
\includegraphics[width=0.32\linewidth]{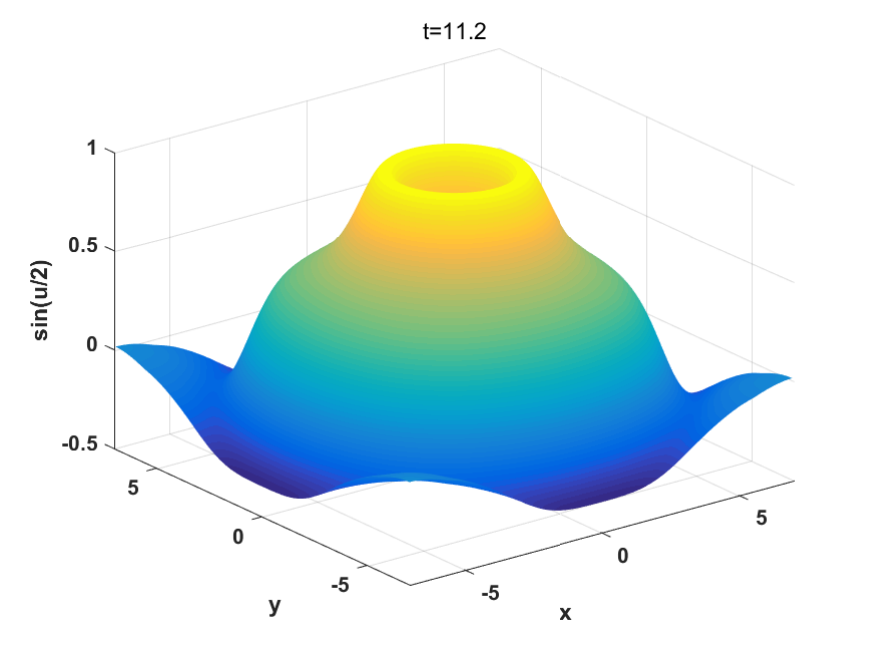}\hspace{-2mm}
\includegraphics[width=0.32\linewidth]{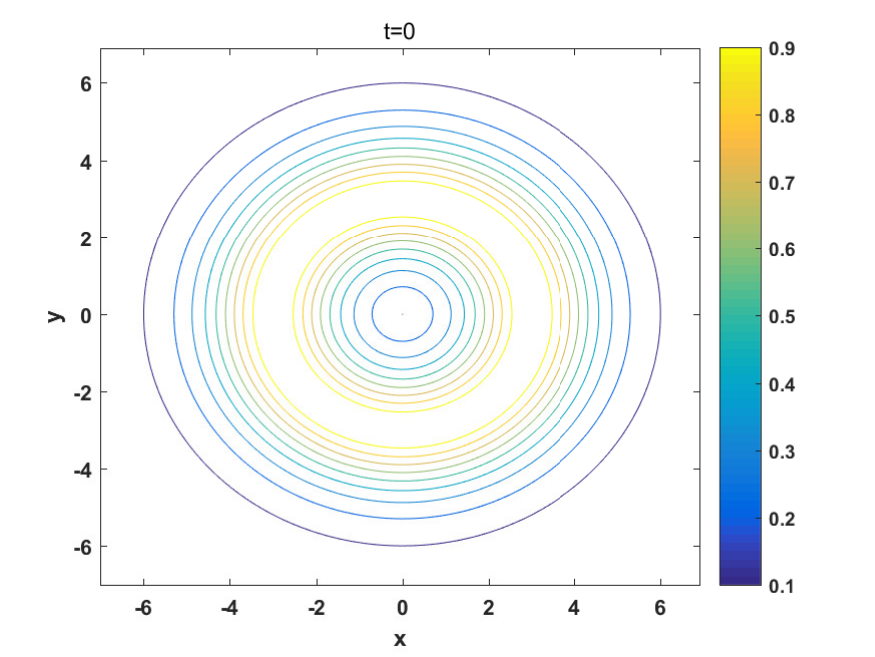}\hspace{-2mm}
\includegraphics[width=0.32\linewidth]{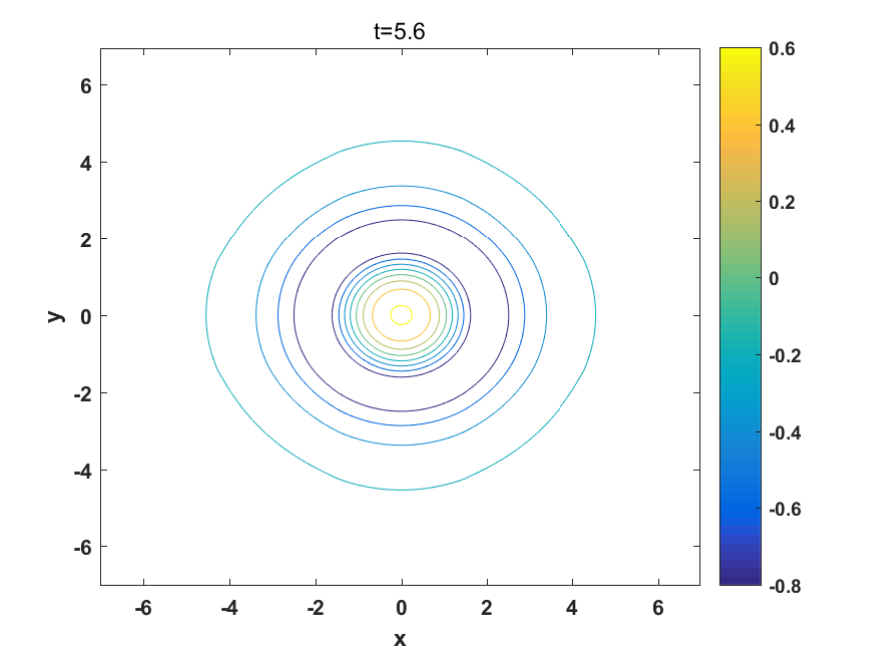}\hspace{-2mm}
\includegraphics[width=0.32\linewidth]{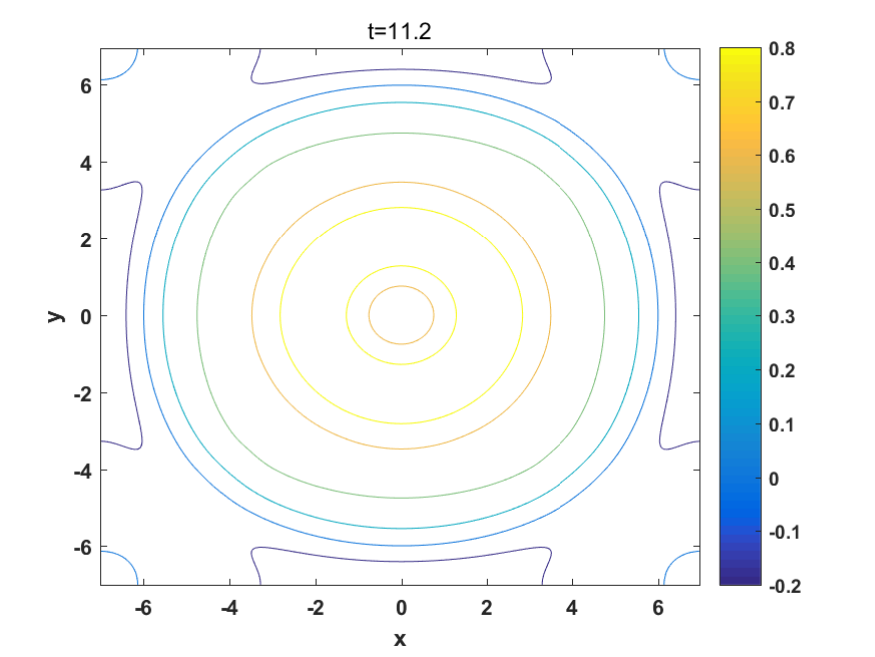}\hspace{-2mm}
\caption{Circular ring soliton: surfaces and contours of the initial condition and numerical solutions of LM-CN in terms of $sin(u/2)$ at different times with $N=128$ and $\Delta t=0.02$.}\label{Fig-9}
\end{figure}

\begin{figure}[htbp]
\centering
\includegraphics[width=0.32\linewidth]{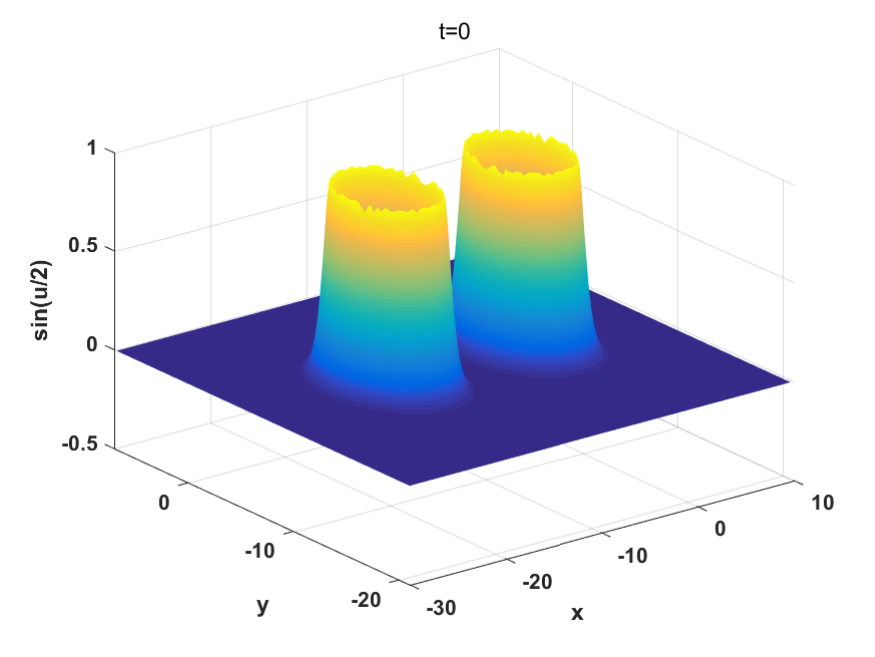}\hspace{-2mm}
\includegraphics[width=0.32\linewidth]{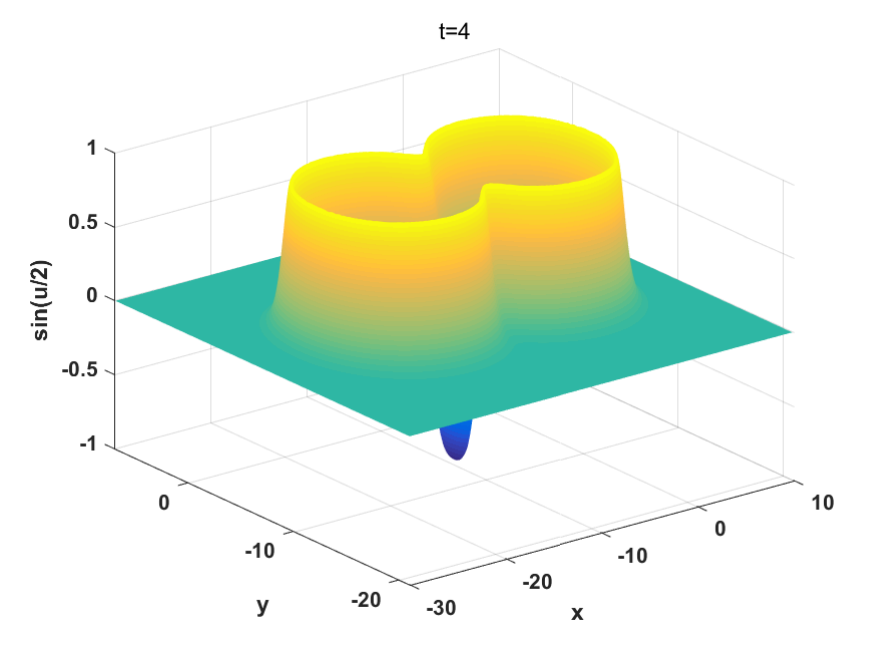}\hspace{-2mm}
\includegraphics[width=0.32\linewidth]{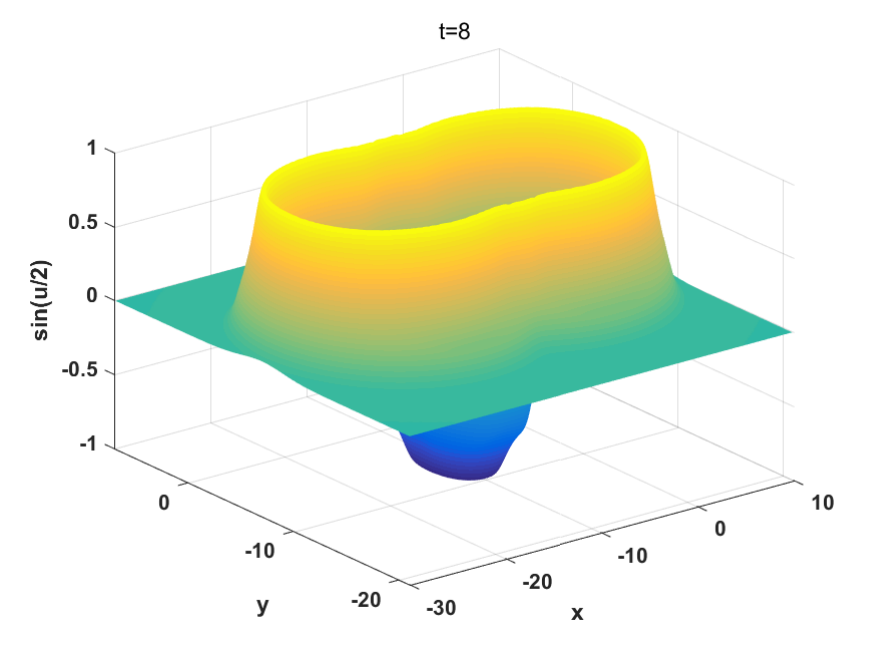}\hspace{-2mm}
\includegraphics[width=0.32\linewidth]{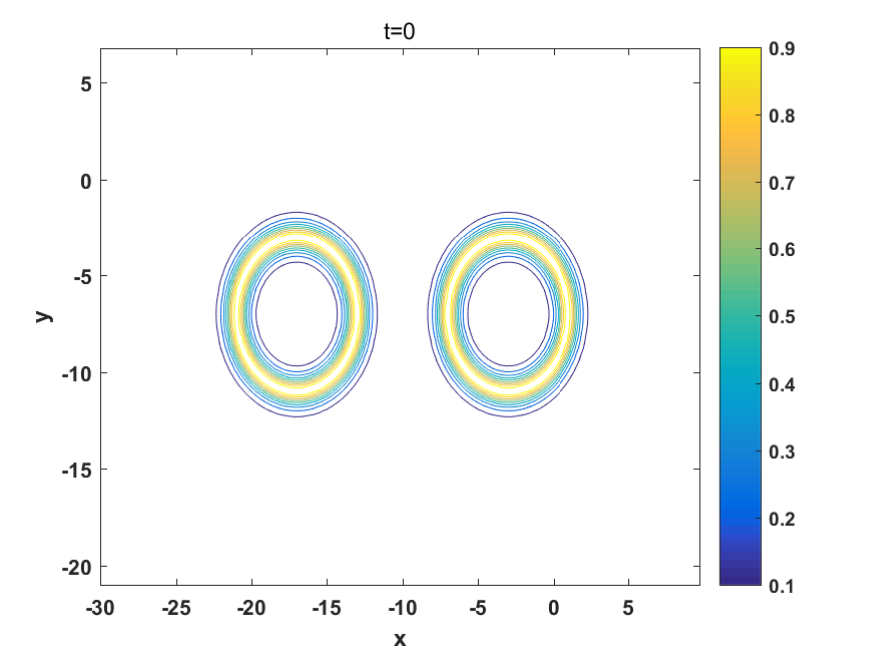}\hspace{-2mm}
\includegraphics[width=0.32\linewidth]{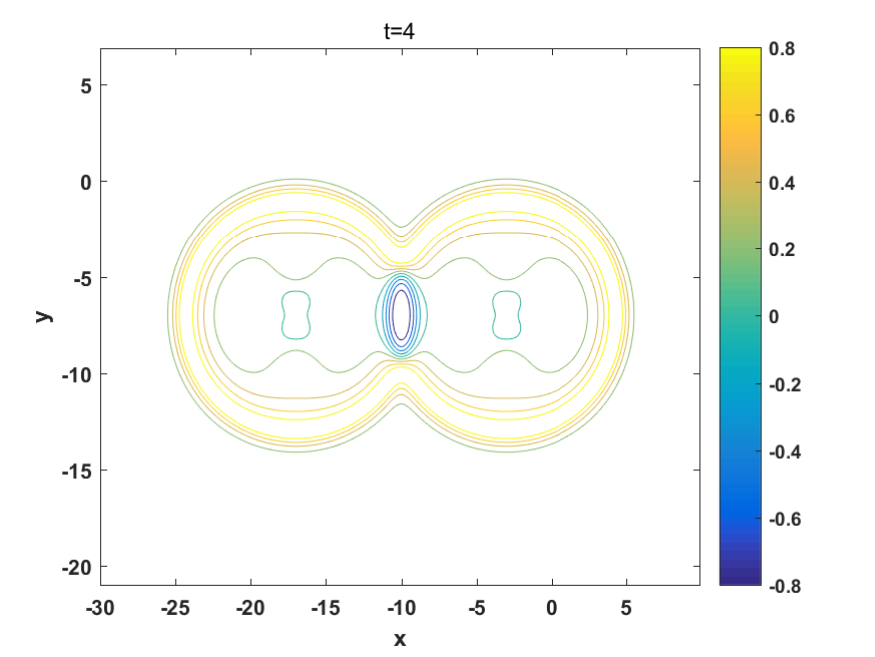}\hspace{-2mm}
\includegraphics[width=0.32\linewidth]{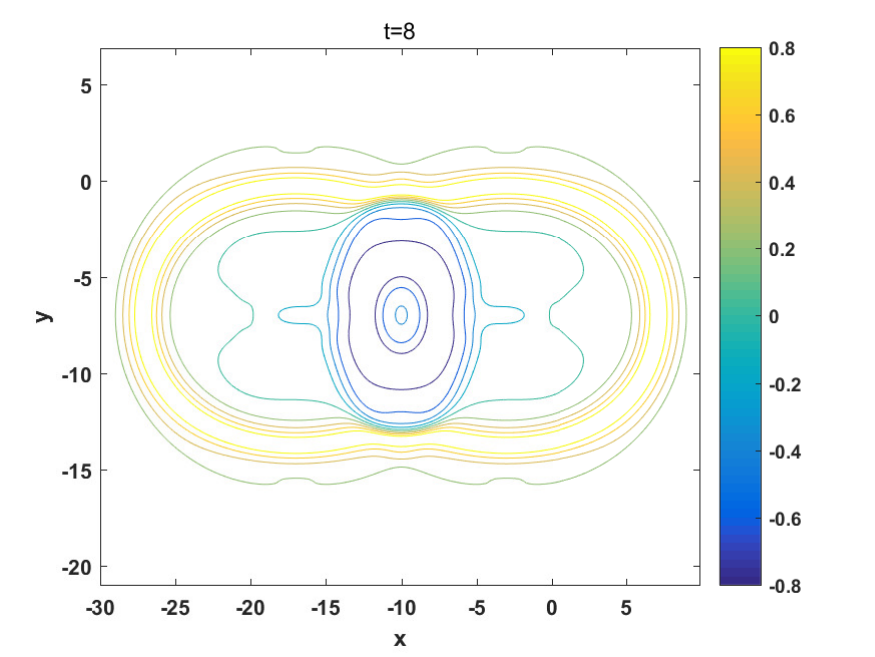}\hspace{-2mm}
\caption{Collision of two circular solitons: surfaces and contours of the initial condition and numerical solutions of LM-CN in terms of $sin(u/2)$ at different times with $N=128$ and $\Delta t=0.02$.}\label{Fig-10}
\end{figure}

\begin{figure}[htbp]
\centering
\includegraphics[width=0.32\linewidth]{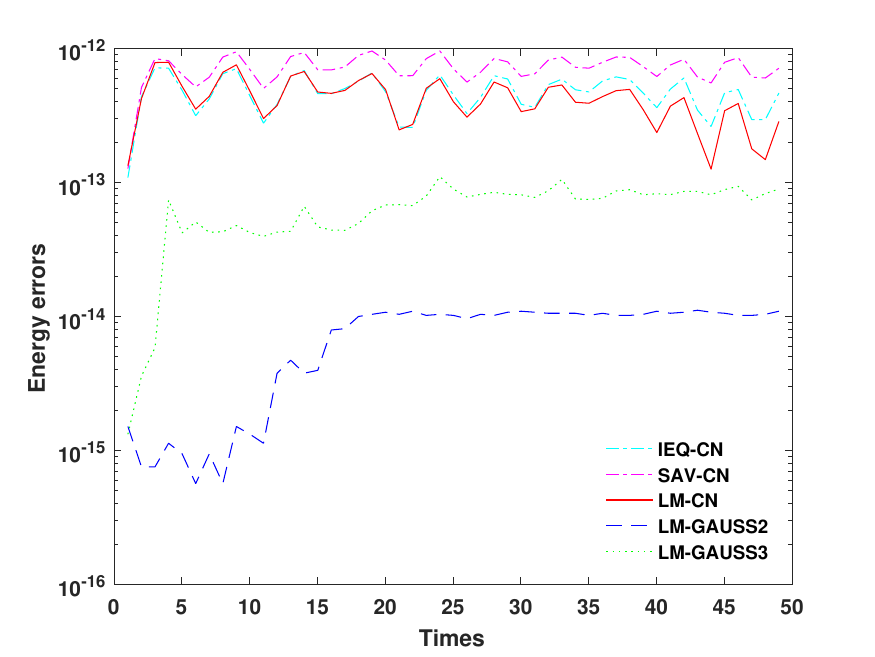}\hspace{-2mm}
\includegraphics[width=0.32\linewidth]{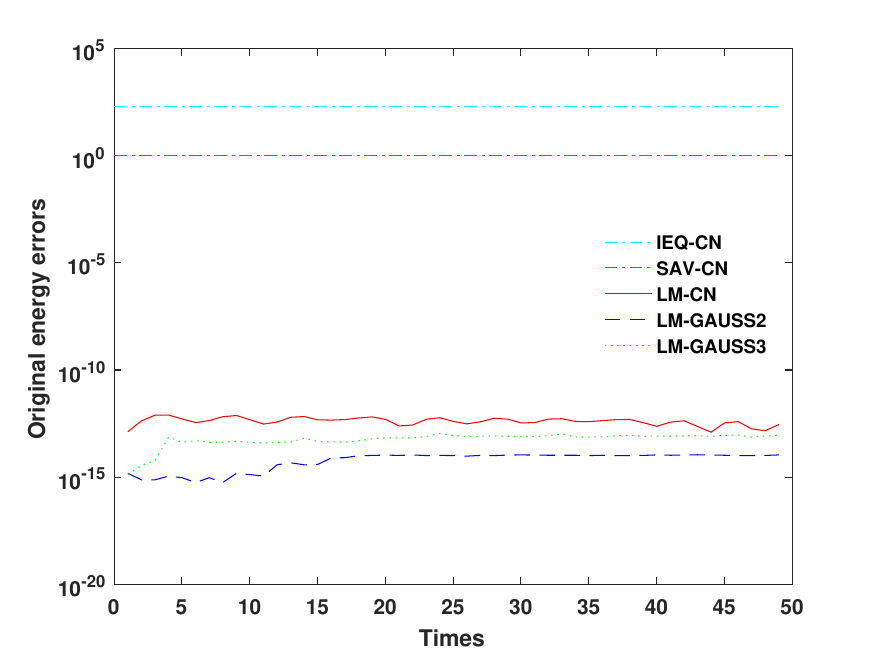}\hspace{-2mm}
\includegraphics[width=0.32\linewidth]{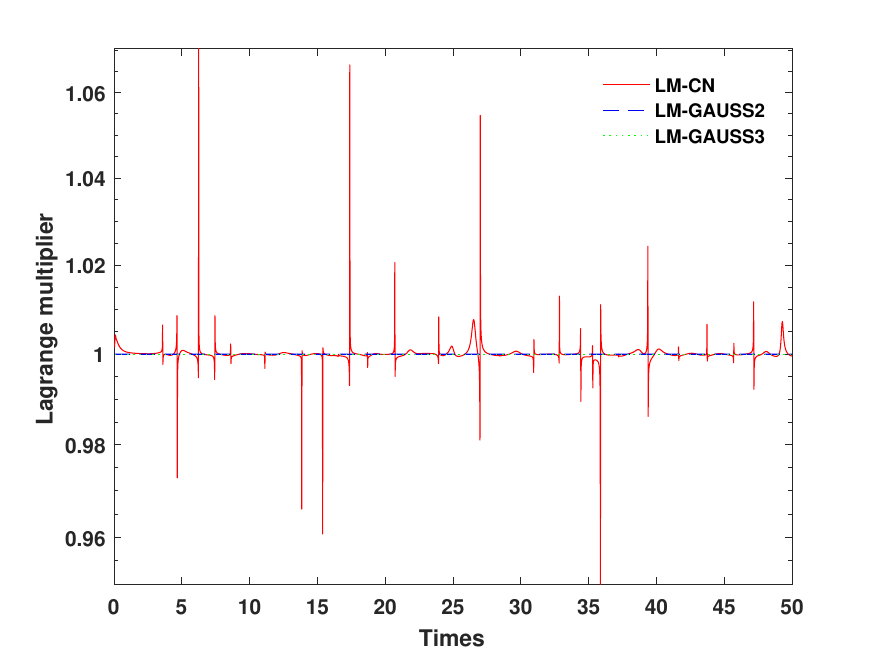}\hspace{-2mm}
\includegraphics[width=0.32\linewidth]{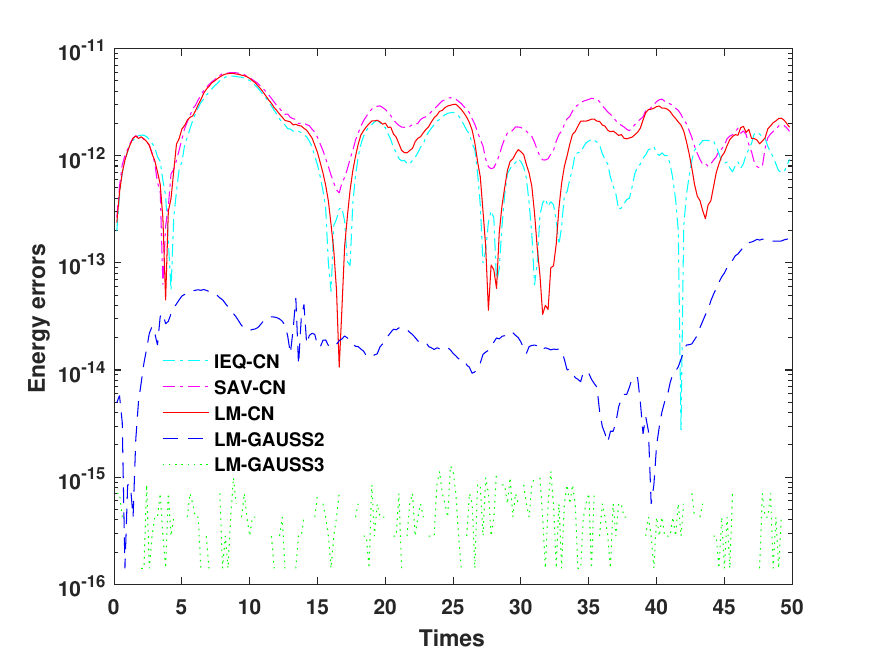}\hspace{-2mm}
\includegraphics[width=0.32\linewidth]{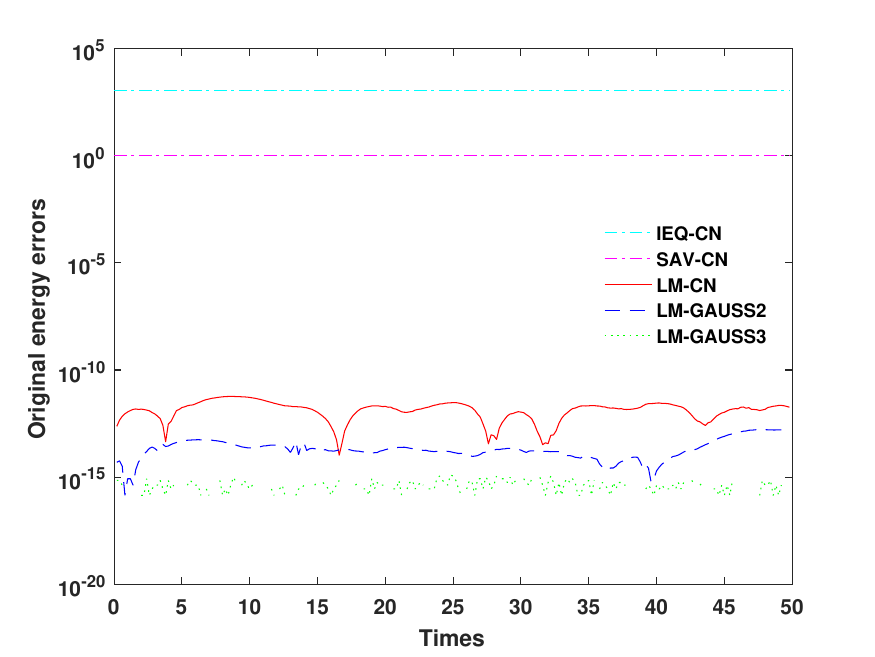}\hspace{-2mm}
\includegraphics[width=0.32\linewidth]{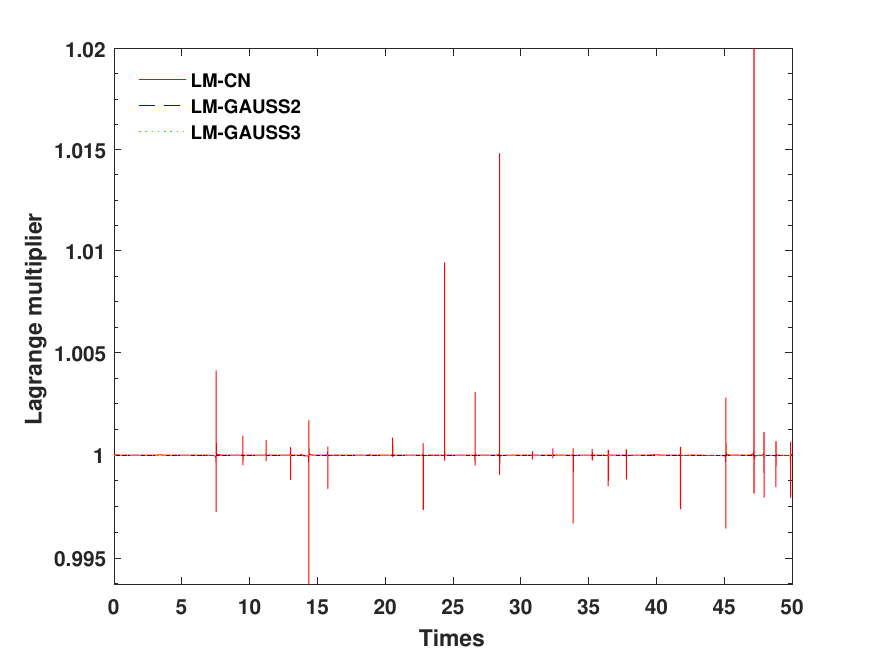}\hspace{-2mm}
\caption{The SG equation: Long-time energy errors and values of the Lagrange multiplier for the circular ring soliton (first row) and collision of two circular solitons (second row) with $N=128$ and $\Delta t=0.02$ until $T=50$.}\label{Fig-11}
\end{figure}

The superior performance of the proposed schemes extends to two-dimensional problems. Table \ref{Fig-2} shows that their convergence rates match the theoretical results. In Fig. \ref{Fig-7}, the runtime of LM-CN remains competitive with that of the auxiliary variable schemes even for high-dimensional cases, further demonstrating its high efficiency. All high-order schemes also perform efficiently in long-time computations. These schemes require a cost of roughly one iteration in Fig. \ref{Fig-8}, indicating a minimal computational cost. The values of the Lagrange multiplier are still near $1$, which again verifies the theoretical results. Fig. \ref{Fig-9} and Fig. \ref{Fig-10} depict the evolution of two numerical solitons computed by LM-CN at different instants. The contraction and expansion processes of the circular soliton are accurately captured, with no displacement observed at the soliton center. The collision between two circular solitons clearly preserves symmetry with respect to the lines $x=-10$ and $y=-7$, as evident from the contour plots. These numerical results are in precise agreement with prior studies \cite{bo-22-EIEQ-MS,cai-19-SG-NB-JCP,jiang-19-SG-IEQ-JSC}. Furthermore, the proposed Lagrange multiplier schemes consistently preserve the original energy in Fig. \ref{Fig-11}.

\section{Conclusions}\label{sec:conclusions}
In this paper, we present a novel framework for constructing linearly implicit energy-preserving schemes of arbitrary order for Hamiltonian PDEs. The methodology integrates the newly developed Lagrange multiplier approach with symplectic Runge-Kutta methods and a prediction-correction strategy, enabling exact preservation of the original energy in both continuous and discrete settings. The resulting numerical schemes only require solving linear equations with constant coefficients. When combined with the Fourier pseudo-spectral spatial discretization, their computational efficiency is further improved via fast Fourier transforms. We provide rigorous proofs of the energy conservation and numerical accuracy for all schemes. Finally, numerical experiments on three representative Hamiltonian PDEs demonstrate the superior behaviors of the proposed schemes.

\bibliographystyle{siamplain}
\bibliography{references}
\end{document}